\newcommand{\R}{\mathbb R}
\theoremstyle{plain}
\newtheorem{theorem}{Theorem}
\theoremstyle{definition}
\newtheorem{definition}[theorem]{Definition}
\theoremstyle{plain}
\newtheorem{proposition}[theorem]{Proposition}
\theoremstyle{plain}
\theoremstyle{plain}
\newtheorem{lemma}[theorem]{Lemma}
\theoremstyle{plain}
\newtheorem{remark}[theorem]{Remark}
\theoremstyle{plain}
\newtheorem{problem}[theorem]{Problem}
\theoremstyle{plain}
\newtheorem{conjecture}[theorem]{Conjecture}
\theoremstyle{plain}
\newtheorem{question}[theorem]{Question}
\theoremstyle{plain}
\newtheorem{counterexample}[theorem]{Counterexample}
\begin{document}


\title{\LARGE\textbf{Convolutions of Totally Positive Distributions with applications to Kernel Density Estimation}}

\author[1]{Ali Zartash}
\author[2]{Elina Robeva}
\affil[1]{\textit{Massachusetts Institute of Technology;\,\, aliz@alum.mit.edu}}
\affil[2]{\textit{The University of British Columbia;\,\, erobeva@math.ubc.ca}}
\date{}
\maketitle

\begin{abstract}
\normalsize
In this work we study the estimation of the density of a {\em totally positive} random vector. Total positivity of the distribution of a random vector implies a strong form of positive dependence between its coordinates and, in particular, it implies { positive association}. Since estimating a totally positive density  is a non-parametric problem, we take on a (modified) kernel density estimation approach. Our main result is that the sum of scaled standard Gaussian bumps centered at a min-max closed set provably yields a totally positive distribution. Hence, our strategy for producing a totally positive estimator is to form the {\em min-max closure} of the set of samples, and output a sum of Gaussian bumps centered at the points in this set. We can frame this sum as a convolution between the uniform distribution on a min-max closed set and a scaled standard Gaussian. 
We further conjecture that convolving any totally positive density with a standard Gaussian remains totally positive.


\smallskip
\textit{Keywords:} Total Positivity, Kernel Density Estimation, MTP$_2$
\end{abstract}





\section{Introduction}\label{sec:intro}

In this paper we study density estimation under the assumption that the original density fulfills the condition of {\em total positivity}, a specific, strong form of positive dependence. A distribution defined by a density function $f$ over $\mathcal{X}=\prod_{i=1}^d\mathcal{X}_{i}$, where each set $\mathcal{X}_i$ is totally ordered, is \emph{multivariate totally positive of order 2} (MTP$_2$) if
$$
f(x)f(y)\quad\leq \quad f(x\wedge y)f(x\vee y)\qquad\mbox{for all }x,y\in \mathcal{X},
$$
where $x\wedge y$ and $x\vee y$ are the coordinate-wise minimum and maximum. We restrict our attention to densities on $\mathcal X = \mathbb R^d$. 
If $f$ is strictly positive, then $f$ is MTP$_2$ if and only if $f$ is \emph{log-supermodular}, i.e., $\log(f)$ is supermodular:\vspace{-0.2cm}

$$\log(f(x)) + \log(f(y))\leq \log(f(x\vee y)) + \log(f(x\wedge y)).$$

Let $X_1,\dots , X_n\in\mathbb{R}^d$ be independent and identically distributed (i.i.d.) samples from an MTP$_2$ distribution with density function $f_0$. 
Computing an estimate for $f_0$ is a non-parametric density estimation problem. Attractive methods for solving such problems
include
kernel density estimation (KDE), adaptive smoothing, neighbor-based techniques, and non-parametric shape constraint estimation. For details we refer to the
surveys \cite{izenman1991review,turlach1993bandwidth, scott2015multivariate, silverman2018density,wand1994kernel,chen2017tutorial,wasserman2016topological,GJ_review}
 and references therein. We restrict our attention to the kernel density estimation approach. The general kernel density estimate $\hat{f}(\mathbf{x})$ of the density at a point $\mathbf{x} \in \R^d$ is given by
 \[
    \hat{f}(\mathbf{x}) = \frac{1}{nh^d}\sum_{i = 1}^{n} K \left(\frac{\mathbf{x} - X_i}{h}\right)
 \]
 where $K$ is a \textit{kernel function} and $h$ is the \textit{smoothing parameter}.
 Motivations for this estimator can be found, for example, in \cite[pp. 138-142]{scott2015multivariate}.
 
 The standard kernel density estimator would, in general, not be MTP$_2$. Instead, we augment our sample set to produce a new estimator which {\em is} MTP$_2$. We exploit the assumption that the true density is MTP$_2$ by substituting the sample $X = \{X_1,\ldots, X_n\}$ with its {\em min-max closure} MM$(X)$. By definition of the MTP$_2$ property, we know that any two points $a$ and $b$ in the sample $X$, satisfy $f_0(a)f_0(b)\leq f_0(a\wedge b)f_0(a\vee b)$ under the true density $f_0$. Thus, intuitively, the points $a\wedge b$ and $a\vee b$ could serve as even better representatives for the unknown distribution $f_0$ than $a$ and $b$ themselves, motivating us to add them to our sample. Performing this process repeatedly, we arrive at the min-max closure MM$(X)$ of the original sample $X$. We then convolve the uniform distribution on MM$(X)$ with a scaled standard Gaussian kernel. In other words, our estimator is a sum of scaled standard Gaussian bumps at the points in the {\em min-max closure of the sample} as opposed to the points in the original sample itself, which is the case in standard kernel density estimation. It is in general {\em not true} that convolving two MTP$_2$ functions yields an MTP$_2$ function. However, our main result (cf. Theorem~\ref{thm:main}) shows that the convolution is MTP$_2$ in the specific case of our estimator -- convolving (the indicator of) a min-max closed set with any scaled {\em standard} Gaussian kernel yields an MTP$_2$ density. In the process of proving this result we provide a general condition under which the convolution of an MTP$_2$ function and a scaled standard Gaussian is MTP$_2$ (cf. Theorem~\ref{thm:conv}) and we prove that the indicator function of a min-max closed set satisfies this condition (cf. Proposition~\ref{prop}). Finally, we conjecture that convolving any MTP$_2$ density with a scaled standard Gaussian remains MTP$_2$.

\subsection{Related work}\label{sec:work}

MTP$_2$ was introduced in~\cite{fortuin1971correlation} and it implies \emph{positive association}, an important property in probability theory and statistical physics, which is usually difficult to verify. In fact, most notions of positive dependence are implied by MTP$_2$; see for example~\cite{colangelo2005some, colangelo2006some} for an overview. Furthermore, the class of MTP$_2$ distributions has desirable properties \cite{muller2000some}, for example, it is closed under marginalization and conditioning~\cite{KR} (in particular, Yule-Simpson's Paradox cannot happen for an MTP$_2$ distribution), and independence models generated by MTP$_2$ distributions are compositional semigraphoids~\cite{MTP2Markov2015}. The special case of Gaussian MTP$_2$ distributions was studied by Karlin and Rinott~\cite{karlinGaussian} and also in~\cite{slawski2015estimation, LUZ} from 
the perspective of MLE and optimization.

Even though the MTP$_2$ condition is quite strong (recall that it implies all other forms of positive dependence), a variety of distributions are intrinsically MTP$_2$. For example, order statistics of i.i.d. variables~\cite{KR}, ferromagnetic Ising models~\cite{Leb72}, Brownian motion tree models, and Gaussian and binary latent tree models~\cite{Zwiernik}.

With regards to density estimation, the MTP$_2$ assumption is a shape constraint on $f_0$, the true density at hand. Estimation under MTP$_2$ has been studied in the recent past.
In~\cite{RobStuTraUhl18}, the authors study maximum likelihood estimation under MTP$_2$; since the likelihood function is unbounded over the set of MTP$_2$ distributions, they impose the additional restriction of log-concavity on the unknown density. In other work,~\cite{HutMaoRigRob19, HutMaoRigRob20}, the authors study the minimax rates of estimating 2-dimensional MTP$_2$ densities with bounded domain.

In the present paper, we take a modified kernel density estimation approach for estimating general MTP$_2$ densities.

\subsection{Organization}
The rest of the paper is organized as follows. In Section~\ref{sec:tpkde} we define and describe our estimator, the {\em Totally Positive Kernel Density Estimator (TPKDE)}, in detail. In Section~\ref{sec:conv} we discuss convolutions and compositions of totally positive functions in a general setting and introduce Theorem~\ref{thm:conv} and Proposition~\ref{prop}. In Theorem~\ref{thm:conv} we provide a sufficient condition for the convolution of two MTP$_2$ functions to be MTP$_2$. In Proposition~\ref{prop} we show that this condition is satisfied by our estimator (and actually any min-max closed set), meaning that it yields an MTP$_2$ density. This is a combinatorial result of independent mathematical interest, and its proof is located in Section~\ref{sec:prop_proof}. In Section~\ref{sec:conjecture} we discuss extending Theorem~\ref{thm:main} to {\em any} MTP$_2$ distribution. In Section~\ref{sec:algs} we present the algorithmic implementations we devised and used to compute the min-max closure of a set of points. In Section~\ref{sec:experiments} we provide experimental results comparing our estimator to the standard kernel density estimator. We conclude our study in Section~\ref{sec:conclusion}, where we raise a multitude of questions for future work.


\section{The Totally Positive Kernel Density Estimator (TPKDE)}\label{sec:tpkde}

Instead of using standard kernel density estimation, in which one sums up, say, Gaussian bumps centered at the points in our sample set $X = \{X_1,\ldots, X_n\}$, we propose to exploit the MTP$_2$ condition in order to obtain a better estimator. Recall that given two points $X_i$ and $X_j$, under the true density, $f_0$, we have 
$$f_0(X_i)f_0(X_j)\leq f_0(X_i\wedge X_j)f_0(X_i\vee X_j).$$ Therefore, intuitively the (coordinate-wise) minimum $X_i\wedge X_j$ and the maximum $X_i\vee X_j$ have a higher likelihood of occurring than $X_i$ and $X_j$ themselves. This is why we add $X_i\wedge X_j$ and $X_i\vee X_j$ to $X$. Continuing this process recursively, we arrive at the {\em min-max closure} of $X$, denoted MM$(X)$. 

\begin{definition} The {\em min-max closure} of a finite set of points $X = \{X_1,\ldots, X_n\}\subset\mathbb R^d$ is the smallest set $S$ such that $X\subseteq S$, and $S$ is {\em min-max closed}, i.e., for every $a,b\in S$, their minimum $a\wedge b$ and maximum $a\vee b$ are also in $S$.
\end{definition}

Note that the process described above terminates since MM$(X)$ is contained in the finite set of points $Z$ such that for each $i$, the $i$-th coordinate of $Z$ equals the $i$-th coordinate of one of $X_1,\ldots, X_n$.
We sometimes call this finite set the {\em grid generated by $X_1,\ldots, X_n$}.

In our setting it is assumed that the true density is MTP$_2$. Thus, intuitively, we can see that using MM$(X)$ instead of $X$ would give us a ``more representative'' set of points than $X$ and, therefore, would yield a better estimator.

Accordingly, in our modified version of the kernel density estimator, we replace the role of $X$ by its min-max closure MM$(X)$. Without this modification, we would in general {\em not} get an MTP$_2$ density. We can see why this is the case by a simple counterexample.
 \begin{counterexample}
Consider the average of two 2-dimensional standard Gaussians centered at $(0,1)$ and $(1,0)$ (i.e. $X = \{(1,0),(0,1)\}$ is our sample) and let us call this density $\hat{f}$. To see that it is not MTP$_2$, consider the points $p_1 = (0,1)$ and $p_2 = (1,0)$. Then $p_1 \wedge p_2 = (0,0)$, $p_1 \vee p_2 = (1,1)$,  $\hat{f}(p_1)\hat{f}(p_2) = \hat{f}(0,1)\hat{f}(1,0) \approx 0.012$, and $\hat{f}(p_1 \wedge p_2)\hat{f}(p_1 \vee p_2) = \hat{f}(0,0)\hat{f}(1,1) \approx 0.009$. So
$$
\hat{f}(p_1)\hat{f}(p_2) > \hat{f}(p_1 \wedge p_2)\hat{f}(p_1 \vee p_2)
$$
which shows that the MTP$_2$ condition is not satisfied.

\end{counterexample}
We wish to remark that we need to use the multivariate scaled {\em standard} Gaussian kernel in $d$-coordinates for our kernel (see Section~\ref{sec:conv} for why we need this kernel as a opposed to, for example, a general (MTP$_2$) Gaussian kernel). We now define our estimator.

\begin{definition}\label{def:TPKDE}[Totally Positive Kernel Density Estimator]
Given $n$ i.i.d. samples in the sample set $X = \{X_1,\ldots, X_n\}$, each $X_i \in \R^d$ drawn from a true density $f_0$, denote the min-max closure by MM$(X)$ and the size of MM$(X)$ by $m$. We can then write MM$(X)$ as MM$(X) = \{X_1,...X_n,...,X_m\}$, where $m-n \geq 0$ is the number of additional points we added to $X$ to transform it into MM$(X)$. The {\em Totally Positive Kernel Density Estimator} (or {\em TPKDE}) $\hat{f}(\mathbf{x})$ at a point $\mathbf{x} \in \R^d$ is then
$$
\hat{f}(\mathbf{x}) = \frac{1}{m} \sum_{i = 1}^{m} \frac{1}{h^d} N_d\left(\frac{\mathbf{x} - X_i}{h}\right)
$$
where $h \in \R, h > 0$ and $N_d$ is the standard Gaussian kernel (with identity covariance matrix) in $d$-coordinates.
Note that we can also write the {\em TPKDE} equivalently as
\begin{align}\label{eqn:TPKDE}
\hat{f}(\mathbf{x}) = \frac{1}{m} \sum_{i = 1}^{m} N_d^h\left(\mathbf{x} - X_i \right)
\end{align}
where $N_d^h$ is a multivariate Gaussian density in $d$-coordinates with covariance matrix, $\Sigma = h^2I$, where $I$ is the $d\times d$ identity matrix.
\end{definition}
We saw above that the unmodified kernel density estimator need not be MTP$_2$. We show that the TPKDE is, in fact, MTP$_2$ as stated in the following theorem. 

\begin{theorem}\label{thm:main}
The totally positive kernel density estimator is an MTP$_2$ density.
\end{theorem}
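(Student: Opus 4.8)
The plan is to recognize the TPKDE as a convolution and then let the two structural results announced in Section~\ref{sec:intro} do the work. By \eqref{eqn:TPKDE}, if we write $\mu$ for the uniform probability measure on the min-max closure $\mathrm{MM}(X) = \{X_1,\dots,X_m\}$ and $N_d^h$ for the Gaussian density with covariance $h^2 I$, then $\hat f = \mu * N_d^h$; equivalently, $\hat f$ is the convolution of (the suitably normalized indicator of) the min-max closed set $\mathrm{MM}(X)$ with a scaled \emph{standard} Gaussian. This is exactly the setting of Theorem~\ref{thm:conv} and Proposition~\ref{prop}, so the whole proof should reduce to checking that their hypotheses apply and assembling the pieces.

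Concretely, I would proceed in two steps. First, invoke Proposition~\ref{prop}: the indicator of any min-max closed set satisfies the sufficient condition appearing in Theorem~\ref{thm:conv}. Here one should be mindful that $\mathrm{MM}(X)$ is a finite, hence Lebesgue-null, subset of $\R^d$, so ``indicator'' must be read in whichever sense Theorem~\ref{thm:conv} is phrased --- for instance as the atomic measure $\mu$, or as the genuine indicator function on the finite product lattice (the grid generated by $X_1,\dots,X_n$, as observed just after the definition of the min-max closure) on which $\mathrm{MM}(X)$ lives and on which MTP$_2$ is the usual FKG lattice condition. Second, apply Theorem~\ref{thm:conv} itself: because the kernel in the TPKDE is the multivariate standard Gaussian $N_d^h$ with covariance $h^2 I$ --- and not a correlated or general MTP$_2$ Gaussian, which is precisely the special structure Theorem~\ref{thm:conv} exploits --- the convolution $\mu * N_d^h = \hat f$ is MTP$_2$. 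It then remains only to note that $\hat f$ is a bona fide density, which is immediate since $\mu$ and $N_d^h$ are both probability measures, so $\hat f$ is nonnegative and integrates to $1$.

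I expect essentially all of the difficulty to sit below this reduction, namely in Proposition~\ref{prop} (and, one level further down, in Theorem~\ref{thm:conv}). Verifying that the indicator of a min-max closed set meets the sufficient condition is a purely combinatorial statement about how the coordinatewise minima and maxima of the generating points interact on the grid, and I would expect it to proceed by an induction on the grid (or on the number of generating points), which is the step most likely to be delicate. By contrast, the reduction of Theorem~\ref{thm:main} to Theorem~\ref{thm:conv} and Proposition~\ref{prop} is bookkeeping: rewriting \eqref{eqn:TPKDE} as $\mu * N_d^h$, checking normalization, and making sure the class of objects to which Proposition~\ref{prop} applies matches the hypotheses of Theorem~\ref{thm:conv}. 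The only genuine subtlety at the level of Theorem~\ref{thm:main} is the discreteness of $\mathrm{MM}(X)$ just mentioned; if Theorem~\ref{thm:conv} were stated only for honest densities rather than for the discrete/lattice setting, I would handle this by a limiting argument carried out on the finite grid, but I anticipate the statements are already arranged so that no such workaround is needed.
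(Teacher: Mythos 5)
Your proposal is correct and follows essentially the same route as the paper: rewrite the TPKDE via \eqref{eqn:TPKDEconv} as the convolution of the uniform distribution $Q$ on $\mathrm{MM}(X)$ with the scaled standard Gaussian $N_d^h$, invoke Proposition~\ref{prop} to verify that $Q$ satisfies Constraint~A, and then apply Theorem~\ref{thm:conv}. Your concern about discreteness is already handled by the hypotheses of Theorem~\ref{thm:conv} and Proposition~\ref{prop}, which explicitly allow $Q$ to be a discrete distribution with weights $\alpha(x)=Q(\{x\})$, so no limiting argument is needed.
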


This result is a consequence of Theorem~\ref{thm:conv} and Proposition~\ref{prop}, and its proof is stated at the end of Section~\ref{sec:conv}. The result is interesting not only because of its use for kernel density estimation, but also because typically convolutions of MTP$_2$ functions are not MTP$_2$; i.e., the set of MTP$_2$ functions is \textit{not closed} under convolution. We devote Section~\ref{sec:conv} to a discussion of this matter.

Also note that for our estimator to be MTP$_2$ we need to use a scaled {\em standard} Gaussian kernel, i.e., with covariance matrix $\Sigma = hI$, a multiple of the identity matrix. In Section~\ref{sec:conv}, Counterexample~\ref{counter:2d_mtp2} shows that using an MTP$_2$ Gaussian kernel with different covariance could give rise to a non-MTP$_2$ density.

\section{Convolutions of MTP$_2$ Functions}\label{sec:conv}

The Totally Positive Kernel Density estimator for a sample set $X = \{X_1,\ldots, X_n\}$ with min-max closure MM$(X) = \{X_1,\dots,X_m\}$ given in~\eqref{eqn:TPKDE}
can actually be seen as a convolution of an MTP$_2$ distribution with a scaled standard Gaussian. For a function $g : \R^d \rightarrow \R$ and a distribution $Q$ on $\mathbb  R^d$, we define their {\em convolution} $C(\mathbf{a}) : \R^d \rightarrow \R$ at $(a_1,\dots,a_d)=\mathbf{a} \in \R^d$ as 
\begin{align}\label{eqn:conv}
C(\mathbf{a}) \overset{\Delta}{=} \int_{\mathbf{x} \in \R^d}  g(\mathbf{a}-\mathbf{x}) Q(d\mathbf{x}).
\end{align}
Here $Q$ is either a distribution with Lebesgue density $\alpha$ or a discrete distribution expressible as a sum of finitely many dirac-delta functions with weights $\alpha(x) = Q(\{x\})$.

 Note that the totally positive kernel density estimator is
\begin{align}\label{eqn:TPKDEconv}
\hat{f}(\mathbf{a}) = C(\mathbf{a})
\end{align}
with
$$
g = N_d^h,
$$
and $Q$ is the uniform distribution on MM$(X)$.
 
 This convolution is similar to other types used in many {\em composition theorems} of  MTP$_2$ kernels that have been proved and discussed in great detail by Karlin, for example, in \cite{karlin_1968}. However, the type of convolution we have defined above is generally omitted from any such previous discussion. This is because it, in general, does {\em not} satisfy the MTP$_2$ condition. Specifically, despite both of $g$ and $Q$ being MTP$_2$, it is generally not true that $C$ is MTP$_2$.
 
 One such counterexample is given by Karlin and Rinott in \cite[pp. 486]{KR}. They use the result from \cite{karlinGaussian} that a multivariate Gaussian density is MTP$_2$ if and only if the inverse of its covariance matrix has only non-positive off-diagonal entries (i.e. is an {\em M-matrix}). They provide two 3-dimensional positive definite matrices $A$ and $B$ whose inverses are  M-matrices, but the inverse $(A+B)^{-1}$ of $A+B$ is not an $M$-matrix. (Note that the convolution of the two Gaussians with covariances $A$ and $B$ is another Gaussian distribution with covariance $A+B$.) Therefore, the convolution of two MTP$_2$ Gaussians need not be MTP$_2$ and, accordingly, two general MTP$_2$ functions need not convolve to produce an MTP$_2$ function.
 
 We provide another counterexample in 2-dimensions, that is also relevant to our setting. We show that convolving the uniform distribution of a min-max closed set with a {\em non-standard} MTP$_2$ Gaussian need not be MTP$_2$.
 
 \begin{counterexample}\label{counter:2d_mtp2}
 Consider the set of points $X = \{(2,0),(0,1)\}$. Its min-max closure is MM$(X) = \{(2,0),(0,1),(2,1),(0,0)\}$. Then the unifrom distribution on this set $Q = U_{\text{MM}(X)}$ is obviously MTP$_2$ due to the definition of a min-max closed set. However, if we convolve $Q$ with an MTP$_2$ Gaussian $g$ whose inverse covariance matrix is the $M$-matrix
 $$
 \Sigma^{-1}= \begin{pmatrix} 5 & -2 \\
                             -2 & 1  
                             \end{pmatrix}
 $$
 the result is not MTP$_2$. For example, if $p_1 = (0.98, 0.43)$ and $p_2 = (0.49, 0.7 )$, and if $C$ denotes the convolution of $g$ and $Q$ we get that 
 $$
    C(p_1 \wedge p_2)C(p_1 \vee p_2) < C(p_1)C(p_2).
 $$
 This is a violation of the MTP$_2$ condition.
  \end{counterexample}
  
A more detailed discussion of this counterexample is provided in Appendix~\ref{appendix:B}.

 Thus, we know not all MTP$_2$ Gaussian densities can be used as the kernel in our estimator. We now show (cf. Theorem~\ref{thm:conv}) that choosing a covariance of $\Sigma = hI$ and imposing a further restriction on $Q$ (cf. Constraint A) yields an MTP$_2$ convolution. Then in Proposition~\ref{prop} we show that if $Q$ is the uniform distribution on a min-max closed set (which is what our estimator uses) it satisfies the sufficient Constraint A, and therefore, our proposed totally positive kernel density estimator from Definition~\ref{def:TPKDE} is an MTP$_2$ density.

\begin{theorem}\label{thm:conv}
	Let $N_d^h(\cdot) : \R^d \rightarrow \R $ be a $d$-dimensional multivariate Gaussian density function with covariance matrix $\Sigma = hI$, i.e., with independent coordinates and a common variance $h$. Let $Q$ be a distribution on $\mathbb R^d$ which either has density $\alpha$ on $\mathbb R^d$ or is discrete with weights $\alpha(x) = Q(\{x\})$. Suppose further that $\alpha$ satisfies the constraint
	\\
	\\
	\emph{\textbf{Constraint A. }\label{cA}
    Let $x_{10} < x_{11}, x_{20} < x_{21}, \ldots, x_{d0} < x_{d1}$, where $x_{ij}\in\mathbb R$ for all $1\leq i\leq d$ and $0\leq j\leq 1$. For each binary string $b \in \{0, 1 \}^d$, denote $x_b := (x_{1b_1}, x_{2b_2} \ldots, x_{db_d})\in\mathbb R^d$, and let $\overline b := (1-b_1,\ldots, 1-b_d)$ be the complementary binary string. Then for any permutation $\pi$ of $\{1,\ldots, d\}$ we have
	\begin{align}
	\sum_{a\in\{0,1\}^{d-2}} \left(\alpha(x_{\pi(11a)})\alpha(x_{\pi(00\overline a)}) - \alpha(x_{\pi(10a)}) \alpha(x_{\pi(01\overline a)})\right) \geq 0.
	\end{align}}
	\\
	Then the $d$-dimensional convolution of $N_d^h(\cdot)$ with $Q$ is an MTP$_2$ function.
	In other words, the function
	$$
		C(a_1, \dots ,a_d) = \int_{(x_1, \dots ,x_d) \in \R^d}^{}  N_d^h(a_1 - x_1, \dots , a_d - x_d)Q(
		dx_1\dots dx_d),
	$$
	is MTP$_2$.
\end{theorem}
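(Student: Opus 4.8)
The plan is to reduce the $d$-dimensional MTP$_2$ inequality for $C$ to a family of two-variable TP$_2$ inequalities, peel off the Gaussian smoothing in the two active coordinates using Karlin's basic composition formula~\cite{karlin_1968}, and then recognize the surviving combinatorial inequality as exactly Constraint~A. First, since $C$ is the convolution of a probability measure with a strictly positive smooth Gaussian, $C$ is itself strictly positive and smooth; hence by the standard localization of the MTP$_2$ property --- a positive function on a product of totally ordered sets is MTP$_2$ if and only if, after fixing all but two coordinates, the resulting two-variable function is TP$_2$ (see, e.g., \cite{KR}) --- it suffices to fix a pair $i\neq j$ together with values $a_k$ for $k\notin\{i,j\}$ and show that $(a_i,a_j)\mapsto C(a)$ is TP$_2$. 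Writing $s=a_i$ and $t=a_j$ and using that $\Sigma = hI$ makes the $d$-dimensional Gaussian factor into a product of univariate Gaussians $\phi_h$, this restriction is the iterated convolution
\[
g(s,t)=\int \phi_h(s-u)\,\phi_h(t-v)\,\Psi(u,v)\,du\,dv,\qquad
\Psi(u,v)=\int \Bigl(\textstyle\prod_{k\notin\{i,j\}}\phi_h(a_k-x_k)\Bigr)\,\alpha(x)\,dx,
\]
where in $\Psi$ the integration runs over the inactive coordinates $x_k$, $k\notin\{i,j\}$, with $x_i=u$ and $x_j=v$; if $Q$ is discrete, all integrals are finite sums. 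The product structure here is precisely why a non-standard MTP$_2$ Gaussian cannot be used: nonzero cross-covariances would destroy this factorization.

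Next I would peel off the two active Gaussians. Applying the basic composition formula to $g(s,t)=\int\phi_h(s-u)\bigl(\int\phi_h(t-v)\Psi(u,v)\,dv\bigr)du$ expresses the $2\times2$ determinant of $g$ as an integral, over $u<u'$ and $v<v'$, of a product of $2\times2$ determinants of $\phi_h$ (one in $(s,u)$, one in $(t,v)$) times the $2\times2$ determinant of $\Psi$ in $(u,v)$. Since the univariate Gaussian kernel is TP$_2$ --- in fact strictly totally positive of all orders --- the Gaussian determinants are non-negative, so $g$ is TP$_2$ as soon as $\Psi$ is TP$_2$ in its two arguments. Everything thus reduces to showing that $\Psi$, namely $\alpha$ partially smoothed by Gaussians in the $d-2$ inactive coordinates, is TP$_2$, for every pair $\{i,j\}$ and every choice of the fixed $a_k$.

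The third step is the combinatorial heart, and it is where I expect Constraint~A to enter. Fixing $u<u'$ and $v<v'$, the determinant $\Psi(u,v)\Psi(u',v')-\Psi(u,v')\Psi(u',v)$ unfolds into a double integral over $(y,y')\in\R^{d-2}\times\R^{d-2}$ whose weight $\prod_{k\notin\{i,j\}}\phi_h(a_k-y_k)\phi_h(a_k-y'_k)$ is non-negative and invariant under the group $(\mathbb{Z}/2)^{d-2}$ that swaps $y_k\leftrightarrow y'_k$ one coordinate at a time. Because this group preserves both the weight and Lebesgue measure, I would replace the (coordinatewise antisymmetric) $\alpha$-part of the integrand by its average over the group; a direct computation then shows this average equals a positive multiple of
\[
\sum_{a\in\{0,1\}^{d-2}}\Bigl(\alpha(x_{\pi(11a)})\alpha(x_{\pi(00\overline a)})-\alpha(x_{\pi(10a)})\alpha(x_{\pi(01\overline a)})\Bigr),
\]
where $x_{i0}=u$, $x_{i1}=u'$, $x_{j0}=v$, $x_{j1}=v'$, while $x_{k0}=\min(y_k,y'_k)$ and $x_{k1}=\max(y_k,y'_k)$ for the inactive coordinates, and $\pi$ is the permutation carrying the two distinguished positions to $i$ and $j$; the complementary string $\overline a$ appears because the two values in each inactive coordinate are split between $y$ and $y'$ in opposite ways. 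Constraint~A says exactly that this sum is non-negative; together with the non-negativity of the weight, this makes the determinant of $\Psi$ non-negative and closes the argument (the discrete case being identical, with sums in place of integrals).

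The main obstacle is precisely the recognition in this last step. Steps one and two are routine once one has the localization principle and the composition formula; the real content of the theorem is that Constraint~A is exactly the hypothesis on $\alpha$ forcing the symmetrized $\Psi$-determinant to be non-negative, and verifying that the group-average of the $\alpha$-bracket reproduces Constraint~A --- with its particular complement pairing $a\leftrightarrow\overline a$ and its permutation $\pi$ --- is the substantive computation, which is presumably how one arrives at Constraint~A. As a heuristic sanity check on the form of the result, one may instead identify $C$ with the density of $X+Z$ for $X\sim Q$ and $Z\sim N(0,hI)$ and use the Tweedie/Miyasawa identity $\partial_i\partial_j\log C(y)=h^{-2}\operatorname{Cov}(X_i,X_j\mid X+Z=y)$: MTP$_2$ of $C$ is then equivalent to non-negativity of all these conditional covariances, which should match Constraint~A and which follows from the FKG inequality whenever $\alpha$ is itself MTP$_2$ --- the idea behind the conjecture.
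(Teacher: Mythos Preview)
Your proposal is correct and follows the same skeleton as the paper: localize to pairwise TP$_2$ via Karlin--Rinott, use the product structure of the standard Gaussian to separate the two active coordinates from the $d-2$ inactive ones, handle the active pair via TP$_2$ of the one-dimensional Gaussian, and symmetrize over the $2^{d-2}$ orderings of the inactive pair to produce exactly Constraint~A. The only difference is packaging: you invoke Karlin's composition formula to peel off the two active Gaussians and reduce to TP$_2$ of the partially smoothed kernel $\Psi$, whereas the paper carries out the identical reduction by hand --- an explicit seven-case split on the orderings of the active integration variables, followed by Lemma~\ref{lem:exppos}, whose own proof factors it as precisely the product of the two one-dimensional Gaussian $2\times2$ determinants your composition step produces; the paper's inner $(m,n)$-integral (A.5) is your $\det\Psi$, and its footnoted ``splitting into the $2^{d-2}$ orderings'' is your $(\mathbb Z/2)^{d-2}$ group average.
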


\begin{remark}\label{remark:ConsA}
Note that Constraint A gives a restriction on the values of $\alpha$ at the vertices of the hypercube $\prod_{i=1}^d \{x_{i0}, x_{i1}\}$. For example, when $d=2$, it is {
\em equivalent} to the MTP$_2$ condition. When $d=3$, the constraint simply says
$$\alpha(x_{\pi(111)})\alpha(x_{\pi(000)}) - \alpha(x_{\pi(101)})\alpha(x_{\pi(010)})$$
$$ + \alpha(x_{\pi(110)})\alpha(x_{\pi(001)}) - \alpha(x_{\pi(100)})\alpha(x_{\pi(011)}) \geq 0,$$
a condition involving all of the vertices of the hypercube $\{x_{10}, x_{11}\}\times \{x_{20}, x_{21}\}\times \{x_{30}, x_{31}\}$ in $\mathbb R^3$.
\end{remark}

We provide the proof of this theorem (Theorem~\ref{thm:conv}) in Appendix~\ref{appendix:A}. Our next result (Proposition~\ref{prop}) is the final ingredient to showing Theorem~\ref{thm:main}. 
It is an interesting combinatorial result on its own and we provide its proof in Section~\ref{sec:prop_proof}.

\begin{proposition}\label{prop}
Let $Q$ be the uniform distribution over a min-maxed closed set $M \subseteq \R^d$ (which is potentially a finite set), then, $Q$ satisfies Constraint A.
\end{proposition}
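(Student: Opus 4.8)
The plan is to reduce Constraint A for $Q$ = uniform distribution on a min-max closed set $M$ to a purely combinatorial statement about sublattices of the Boolean cube, and then exhibit an explicit injection. Since $x_{i0}<x_{i1}$ for every $i$, the $2^d$ grid vertices $x_b$ ($b\in\{0,1\}^d$) are distinct and satisfy $x_{b\wedge b'}=x_b\wedge x_{b'}$, $x_{b\vee b'}=x_b\vee x_{b'}$; hence $S:=\{b:x_b\in M\}$ is a sublattice of $\{0,1\}^d$ whenever $M$ is min-max closed. Writing $\alpha$ as a constant times $\mathbf 1_M$ (the constant cancels out of every term of Constraint A) and noting that it suffices to treat $\pi=\mathrm{id}$ (a coordinate permutation preserves the min-max closed property), the key observation is that $(0,0,\overline a)=\overline{(1,1,a)}$ and $(0,1,\overline a)=\overline{(1,0,a)}$; so each product in Constraint A is just the indicator that a cube vertex and its complement both lie in $S$. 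Thus Constraint A is equivalent to: for every sublattice $S\subseteq\{0,1\}^d$,
$$
|M_{11}|\ \ge\ |M_{10}|,\qquad M_{pq}:=\bigl\{a\in\{0,1\}^{d-2}:(p,q,a)\in S\ \text{and}\ (\overline p,\overline q,\overline a)\in S\bigr\}.
$$

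Next I would record that each $M_{pq}$ is itself a sublattice of $\{0,1\}^{d-2}$: if $a,a'\in M_{10}$ then $(1,0,a\vee a')\in S$, while $(0,1,\overline{a\vee a'})=(0,1,\overline a\wedge\overline{a'})\in S$, and symmetrically for the meet (equivalently, $M_{pq}$ is the intersection of a ``slice'' sublattice with the complementation-image of another, and complementation is a lattice anti-automorphism). If $M_{10}=\emptyset$ the inequality is trivial; otherwise let $\check a:=\bigwedge M_{10}$ be its minimum element and define $\phi:M_{10}\to\{0,1\}^{d-2}$ by $\phi(b):=\check a\vee\overline b$. I claim $\phi$ is an injection into $M_{11}$. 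For membership: $\check a,b\in M_{10}$ give $(1,0,\check a),(0,1,\overline b)\in S$ whose join is $(1,1,\phi(b))$, and $(0,1,\overline{\check a}),(1,0,b)\in S$ whose meet is $(0,0,\overline{\check a}\wedge b)=(0,0,\overline{\phi(b)})$, so both lie in $S$. For injectivity: on coordinates $k$ with $\check a_k=1$ every element of $M_{10}$ equals $1$ (being $\succeq\check a$), so they all agree there; on coordinates with $\check a_k=0$ one has $\phi(b)_k=1-b_k$, so $\phi(b)=\phi(b')$ forces $b_k=b'_k$ there as well. Hence $|M_{10}|\le|M_{11}|$, which is Constraint A; the case of an infinite min-max closed set carrying a Lebesgue density is identical, since only the combinatorial sublattice $S$ and a cancelling normalization enter.

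The genuinely creative step — and the one I expect to be the main obstacle — is spotting this reduction (the ``$+$'' and ``$-$'' index pairs of Constraint A are pairs of complementary vertices) together with the exact form of $\phi$. The naive candidates $\phi(b)=b$ and $\phi(b)=\overline b$ do not land in $M_{11}$, and even $\phi(b)=b\vee\overline{\check a}$ fails to be injective; it is precisely joining $\overline b$ with the bottom element of $M_{10}$ that simultaneously forces membership in $M_{11}$ (via the sublattice join/meet computations above) and preserves injectivity. Once $\phi$ is written down, each verification is a one-line lattice computation.
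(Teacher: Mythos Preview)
Your proof is correct and is essentially the same argument as the paper's: your sublattice $M_{10}$ is exactly the paper's $\mathcal A=\mathcal B_1\cap\overline{\mathcal B_2}$, your $\check a$ is their $a^*$, and your injection $\phi(b)=\check a\vee\overline b$ into $M_{11}$ is precisely the map $a\mapsto a^*\vee\overline a$ they use to match each negative term $\alpha(x_{10a})\alpha(x_{01\overline a})$ with a positive term $\alpha(x_{11(a^*\vee\overline a)})\alpha(x_{00(\overline{a^*}\wedge a)})$. Your presentation is somewhat cleaner (explicitly naming $\phi$ and framing the goal as $|M_{11}|\ge|M_{10}|$), but the underlying combinatorial idea and the key injection are identical.
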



Note that the set we use in the TPKDE is a min-max closed set (it is just the min-max closure of the set of sample points). Thus we can now state the simple proof that the TPKDE yields an MTP$_2$ density.

\begin{proof}[Proof of Theorem~\ref{thm:main}]
The TPKDE uses a min-max closed set MM$(X)$ where $X$ is the original sample set. Now, from Proposition~\ref{prop} we know that the uniform distribution $Q$ over MM$(X)$, satisfies Constraint A. Since the TPKDE can be framed as a convolution of $Q$ and a scaled standard Gaussian as shown in \eqref{eqn:TPKDEconv}, we can use Theorem~\ref{thm:conv} to immediately conclude that the totally positive kernel density estimator is MTP$_2$.
\end{proof}

\section{Proof of Proposition~\ref{prop}}\label{sec:prop_proof}
Proposition~\ref{prop} is much more general than its use here in the TPKDE setting. It states that if we take \emph{any} min-max closed set, then the uniform distribution over it (or the set's indicator function) will satisfy Constraint A, which is also a general constraint. This result is of independent mathematical interest as well.

We first introduce a lemma that will be used in the proof of Proposition~\ref{prop}.

\begin{lemma}\label{lem:bin_string}
For a binary string $b$ of arbitrary length $d$, i.e., $b \in \{0,1\}^d$, we use $\overline b := (1-b_1,\ldots, 1-b_d)$ to denote the complementary binary string. For any two binary strings, $a,b \in \{0,1\}^d$, we have that
$$
  \left( \overline a \wedge b \right)=  \overline {\left( a \vee \overline b \right)}.
$$
\end{lemma}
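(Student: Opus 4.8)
This is an elementary identity about complementation, meet, and join on the Boolean lattice $\{0,1\}^d$. The plan is to verify it coordinatewise, since all three operations — coordinatewise minimum $\wedge$, coordinatewise maximum $\vee$, and bit-flip complementation $\overline{(\cdot)}$ — act independently on each coordinate. Thus it suffices to prove the scalar identity $(1-a)\wedge b = 1 - \bigl(a \vee (1-b)\bigr)$ for all $a,b\in\{0,1\}$, where on $\{0,1\}$ we have $x\wedge y = \min(x,y) = xy$ and $x\vee y = \max(x,y) = x + y - xy$.

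The cleanest route is to invoke De Morgan's law for the Boolean lattice: complementation is an order-reversing involution, so it swaps $\wedge$ and $\vee$, giving $\overline{u\wedge v} = \overline u \vee \overline v$ and $\overline{u\vee v} = \overline u \wedge \overline v$ for any bit-vectors $u,v$. Applying this with $u = a$ and $v = \overline b$ yields $\overline{a \vee \overline b} = \overline a \wedge \overline{\overline b} = \overline a \wedge b$, using the involution property $\overline{\overline b} = b$. That is exactly the claimed equality, read right-to-left. If one prefers not to cite De Morgan abstractly, the same computation can be done by a four-line case check on $(a,b)\in\{0,1\}^2$, or algebraically via $(1-a)\cdot b = 1 - \bigl((a + (1-b) - a(1-b))\bigr)$, which simplifies to $b - ab = 1 - (1 - b + ab) = b - ab$.

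I do not anticipate any real obstacle here; the only thing to be careful about is bookkeeping — making sure the complement is applied to $b$ (not $a$) inside the join on the right-hand side, and that the outer complement then distributes correctly. Since this lemma is only a notational convenience used later in the proof of Proposition~\ref{prop} (presumably to rewrite the pairing $x_{\pi(00\overline a)}$ versus $x_{\pi(10a)},\,x_{\pi(01\overline a)}$ in terms of meets and joins of the underlying points, so that min-max closedness can be applied), the proof should be a single short paragraph in the final text.
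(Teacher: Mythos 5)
Your proof is correct and takes essentially the same route as the paper's: the paper argues via the index sets of coordinates equal to $1$ and the set-theoretic identity $(\mathcal A \cup \mathcal B^c)^c = \mathcal A^c \cap \mathcal B$, which is exactly the coordinatewise De Morgan computation you give. No gaps.
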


\begin{proof}
Let $\mathcal A \subseteq [d]$ be the set of indices where the string $a$ has 1s. And similarly, let $\mathcal B \subseteq [d]$ be the set of indices where the string $b$ has 1s. Then $\left( \overline a \wedge b \right)$ has 1s in the index set $(A^c)\cap \mathcal B$.
But if we look at $\left( a \vee \overline b \right)$, it has 1s in the set  $\mathcal A \cup \mathcal B^c$. So then its complement, $\overline {\left( a \vee \overline b \right)}$, would have 1s in the remaining indices i.e. $(\mathcal A \cup \mathcal B^c)^c = \mathcal A^c \cap \mathcal B$. But this is exactly the set of indices in which $\left( \overline a \wedge b \right)$ had 1s, thus proving this lemma.
\end{proof}

\noindent Now, we can start proving Proposition~\ref{prop}.
\begin{proof}[Proof of Proposition~\ref{prop}]
Let $x_{10} < x_{11}, x_{20} < x_{21}, \ldots, x_{d0} < x_{d1}$, where $x_{ij}\in\mathbb R$ for all $1\leq i\leq d$ and $0\leq j\leq 1$. For each binary string $b\in\{0,1\}^d$, denote $x_b := (x_{1b_1}, \ldots, x_{db_d})\in\mathbb R^d$, and let $\overline b := (1-b_1,\ldots, 1-b_d)$ be the complementary binary string. By symmetry, let $\pi$ be the identity permutation on $\{1,2,\ldots, d\}$. We will show that
$$\sum_{a\in\{0,1\}^{d-2}} \left(\alpha(x_{11a})\alpha(x_{00\overline a}) - \alpha(x_{10a}) \alpha(x_{01\overline a})\right) \geq 0.$$
where $\alpha(x) : \R^d \rightarrow  \{0, c\}$ equals $c>0$ on a min-maxed closed set $M \subseteq \R^d$ and 0 outside of it.

If $M$ is a finite set, then $\alpha(x) = c := 1/|M|$ when $x\in M$, and 0 when $x\not\in M$. Otherwise $\alpha$ is the density of the uniform probability measure on $M$, and $\alpha(x) = c$ (for the appropriate $c \in \R$) for $x \in M$ and 0 otherwise.

Let $\mathcal B_1 = \{a\in\{0,1\}^{d-2} : \alpha(x_{10a}) =c > 0\}$. Note that 
$x_a \wedge x_b = x_{a\wedge b}$ and $x_a \vee x_b = x_{a\vee b}$. Now since $M$ is min-max closed, so is $\mathcal B_1$. This is because if $a,b \in \mathcal B_1$, then $\alpha(x_{10a}) = \alpha(x_{10b}) = c$ which means $x_{10a},x_{10b} \in M$ so $x_{10a} \vee x_{10b} = x_{10a\vee b} \in M$ (due to its min-max closure). This means $\alpha(x_{10a\vee b}) = c$, hence $a \vee b \in \mathcal B_1$, and similarly for the minimum $a \wedge b$.

Let $\mathcal B_2 := \{{a}\in\{0,1\}^{d-2} : \alpha(x_{01 a}) > 0\}$. Similarly, since $M$ is min-max closed, so is $\mathcal B_2$. We use the notation $\overline{\mathcal B_2}$ to denote the set of complementary strings of $\mathcal B_2 $, and we can see that $\overline{\mathcal B_2}$ is also min-max closed. This is because when we take the complementary strings, coordinate-wise minima become coordinate-wise maxima and vice-versa, as shown in Lemma~\ref{lem:bin_string} - precisely,
$$
a,b \in \overline{\mathcal B_2} \implies \overline{a},\overline{b} \in \mathcal B_2 \implies \overline{a} \vee \overline{b} \in \mathcal B_2 \implies \overline{\overline{a} \vee \overline{b}} = a \wedge b \in \overline{\mathcal B_2},
$$
and similarly for $a \vee b$.

Now, let $\mathcal A = \mathcal B_1 \cap \overline{\mathcal B_2}$. This is precisely the set of $a\in\{0,1\}^{d-2}$ such that $\alpha(x_{10a}) = c$ and $\alpha(x_{01\overline a}) = c$, i.e. $\alpha(x_{10a})\alpha(x_{01\overline a}) = c^2$. Moreover, $\mathcal A$ is min-max closed (intersection of two min-max closed sets).

We now let $a^*$ be the minimum of $\mathcal A$ (i.e. the indices where $a^*$ has 1s are common to all $a \in \mathcal{A}$). Therefore, since $\overline{\mathcal A} = \overline{\mathcal B_1} \cap \mathcal B_2$ (i.e. the set of $\overline{a}\in\{0,1\}^{d-2}$ such that $\alpha(x_{10a})\alpha(x_{01\overline a}) > 0$), necessarily $\overline{a^*}$ is the maximum of $\overline{\mathcal A}$. To see this, let $\mathcal{O}(x)$ be the set of indices where the binary string $x$ has ones. Then
\begin{align*}
            &\forall b \in \mathcal A, \mathcal O(a^*) \subseteq \mathcal O(b)
           \ \ \ \ \ \ \ \text{(since }a^*\text{ is minimum of } \mathcal A \text{)}\\
    \implies &\forall b \in \mathcal A,\overline{\mathcal O(a^*)} \supseteq \overline{\mathcal O(b)}\\
    \implies &\forall b \in \mathcal A, {\mathcal O(\overline{a^*})} \supseteq {\mathcal O(\overline b)}\\
    \implies &\forall \overline{b} \in \overline{\mathcal A},
        {\mathcal O(\overline{a^*})} \supseteq {\mathcal O(\overline b)}
        \ \ \ \ \ \ \ \ \text{i.e. }\overline{a^*}\text{ is maxmium of }\overline{\mathcal A} \text{.}
\end{align*}

So now, for every $a\in\mathcal A$, we have that $\overline{a}\in \mathcal B_2$, thus, $\alpha(x_{01\overline a}) > 0$. Therefore, since $\mathcal M$ is min-max closed, $\alpha(x_{10a^*}\vee x_{01\overline a}) = c>0$ and $\alpha(x_{10a}\wedge x_{01\overline{a^*}}) = c >0$. But note that $x_{10a^*}\vee x_{01\overline a} = x_{11(a^*\vee \overline a)}$ and $x_{10\overline{a^*}}\wedge x_{01\overline a} = x_{00(\overline{a^*}\wedge a)}$. Moreover, by Lemma~\ref{lem:bin_string}, $\overline{\overline{a^*}\wedge a} = a^*\vee \overline a$. Finally, since $a^*$ is the minimum over all $a\in\mathcal A$, and $\overline{a^*}$ is the maximum over all $\overline a$ for $a\in\mathcal A$, then, for all $a\neq b \in \mathcal A$, $a^*\vee \overline a \neq a^*\vee \overline b$ (as in the indices where $a^*$ has 1s, $\overline{a}$ and $\overline{b}$ will both have 0s anyways), and similarly, $\overline{a^*}\wedge a\neq \overline{a^*}\wedge b$.

Now let $\mathcal C = \{a^*\vee \overline a: a\in\mathcal A\}$. Then, finally,
$$\sum_{a\in\{0,1\}^{d-2}} \left(\alpha(x_{11a})\alpha(x_{00\overline a}) - \alpha(x_{10a}) \alpha(x_{01\overline a})\right) $$
$$= \sum_{a\in\{0,1\}^{d-2}} \alpha(x_{11a})\alpha(x_{00\overline a}) -\sum_{a\in\{0,1\}^{d-2}} \alpha(x_{10a}) \alpha(x_{01\overline a}) $$
$$=\sum_{a\in\{0,1\}^{d-2}}\alpha(x_{11a})\alpha(x_{00\overline a}) - \sum_{a\in\mathcal A}\alpha(x_{10a}) \alpha(x_{01\overline a}) $$
$$= \sum_{a\in\{0,1\}^{d-2}\setminus \mathcal C} \alpha(x_{11a})\alpha(x_{00\overline a}) + \sum_{a\in\mathcal A}\left(\alpha(x_{11(a^*\vee \overline a)})\alpha(x_{00(\overline{a^*}\wedge a)})-\alpha(x_{10a}) \alpha(x_{01\overline a})\right).$$
The first summation is nonnegative. Each of the terms in the second summation equals $c^2 - c^2=0$. Thus, the whole expression is nonnegative, which completes the proof of the Proposition~\ref{prop}.
\end{proof}

\section{Extending Proposition~\ref{prop}}\label{sec:conjecture}
While the convolution of two general MTP$_2$ densities is not necessarily MTP$_2$ (cf. \cite[pp. 486]{KR}), we saw in Proposition~\ref{prop} that convolving a uniform MTP$_2$ distribution with a scaled standard Gaussian is MTP$_2$. In fact, we conjecture  that convolving {\em any} MTP$_2$ distribution with a scaled standard Gaussian remains MTP$_2$.

\begin{conjecture}\label{conj}
Let $Q$ be an MTP$_2$ distribution. Then, $Q$ satisfies Constraint~A.
\end{conjecture}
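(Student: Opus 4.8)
The plan is to reduce Conjecture~\ref{conj} to a statement about a single two-dimensional ``slice'' of $Q$ and then exploit the fact that, in two coordinates, MTP$_2$ is an ordinary multivariate total positivity (TP$_2$) condition for which the Fortuin--Kasteleyn--Ginibre (FKG)-type inequalities are available. Concretely, fix the levels $x_{10}<x_{11},\dots,x_{d0}<x_{d1}$ and, by the symmetry already invoked in the proof of Proposition~\ref{prop}, take $\pi$ to be the identity. Group the binary strings $b\in\{0,1\}^d$ according to their first two coordinates: write $b=(b_1b_2a)$ with $a\in\{0,1\}^{d-2}$. For each fixed $a$ the four values $\alpha(x_{11a}),\alpha(x_{00\overline a}),\alpha(x_{10a}),\alpha(x_{01\overline a})$ do \emph{not} live on a common $2\times 2$ box (the tail is $a$ in two of them and $\overline a$ in the other two), so the MTP$_2$ inequality cannot be applied termwise — this mismatch is exactly why the sum, rather than each summand, must be controlled, and it is the crux of the difficulty.

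First I would set up the right bookkeeping. For $a\in\{0,1\}^{d-2}$ define $u(a)=\alpha(x_{10a})$, $v(a)=\alpha(x_{01a})$, $p(a)=\alpha(x_{11a})$, $q(a)=\alpha(x_{00a})$; the target inequality is $\sum_a\bigl(p(a)q(\overline a)-u(a)v(\overline a)\bigr)\ge 0$. The key structural inputs from $\mtp$-ness of $Q$ are: (i) for each fixed $a$, the four numbers $q(a)\le u(a),v(a)\le p(a)$ satisfy $u(a)v(a)\le p(a)q(a)$ (the $\mtp$ inequality on the box $\{x_{10},x_{11}\}\times\{x_{20},x_{21}\}$ with the last $d-2$ coordinates frozen at $x_{\cdot a}$); and (ii) each of $u,v,p,q$ is, as a function of $a\in\{0,1\}^{d-2}$, log-supermodular, because freezing the first two coordinates of an $\mtp$ function leaves an $\mtp$ function of the remaining coordinates. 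The reindexing $a\mapsto\overline a$ turns (ii) for $q$ into log-\emph{submodularity} of $a\mapsto q(\overline a)$ in the componentwise order; this is the structural tension we must resolve.

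The plan is then to run the same ``contraction/transport'' idea that drives the proof of Proposition~\ref{prop}, but now without the luxury of $\alpha$ being $0/c$-valued. I would try to pair up the indices $a$ where $u(a)v(\overline a)$ is large with indices $a'$ where $p(a')q(\overline{a'})$ is large, using the lattice operations: for $a,b$ in the ``support-like'' set, replace the pair $(a,b)$ by $(a\vee b,a\wedge b)$ and estimate the change via the log-supermodularity of $u,v,p,q$ together with (i). In the Boolean ($0/c$) case this pairing was a clean bijection; in general one expects instead a rearrangement/majorization argument, or an induction on $d$ peeling off one of the last $d-2$ coordinates: split the sum over $a$ into $a=(0a')$ and $a=(1a')$, apply the $d-1$ case to each half, and then glue the two halves using the one-variable $\mtp$ inequality in the peeled coordinate. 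A cleaner route, if it works, is to recognize $\sum_a\bigl(p(a)q(\overline a)-u(a)v(\overline a)\bigr)$ as an inner product $\langle p,Rq\rangle-\langle u,Rv\rangle$ where $R$ is the order-reversing involution on $\{0,1\}^{d-2}$, and to invoke a known correlation inequality (an FKG- or Ahlswede--Daykin-type bound) for log-supermodular functions on the Boolean lattice; the Ahlswede--Daykin ``four functions'' theorem is the natural candidate, since it precisely handles products of four nonnegative set functions summed over lattice elements.

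The hard part will be handling the order reversal on the tail coordinates: both $p$ and $q$ are log-supermodular in $a$, but the conjectured inequality asks us to sum $p(a)q(\overline a)$, and $a\mapsto q(\overline a)$ is log-supermodular in the \emph{reversed} order, so a naive FKG application gives an inequality pointing the wrong way. I expect the resolution to require using inequality (i) — the genuinely two-dimensional $\mtp$ content at each $a$ — to absorb this discrepancy, quantitatively, rather than just the separate log-supermodularity of the four functions; equivalently, one must use that the \emph{full} function $\alpha$ on $\{0,1\}^d$ is $\mtp$, not merely its marginals along the first two versus the last $d-2$ coordinates. Pinning down the exact combinatorial identity that makes the telescoping/transport work in the non-Boolean case — the analogue of the set $\mathcal C=\{a^*\vee\overline a:a\in\mathcal A\}$ and the ``each term is $c^2-c^2=0$'' cancellation in the proof of Proposition~\ref{prop} — is where I anticipate the real work, and it is plausible that the conjecture is false in this generality, in which case the task becomes finding an $\mtp$ density $Q$ on $\mathbb R^3$ whose values on some hypercube violate the $d=3$ form of Constraint~A displayed in Remark~\ref{remark:ConsA}.
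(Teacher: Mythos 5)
First, a point of context: the statement you are proving is stated in the paper as Conjecture~\ref{conj} and is explicitly left open there (the authors say they have not been able to prove it), so there is no proof in the paper to compare yours against. Your write-up is, by its own admission, a plan rather than a proof. You set up the correct bookkeeping (the functions $u,v,p,q$ and the target $\sum_a\bigl(p(a)q(\overline a)-u(a)v(\overline a)\bigr)\ge 0$), you correctly isolate the central obstruction --- the reversal $a\mapsto\overline a$ on the tail coordinates makes $q(\overline a)$ and $v(\overline a)$ log-supermodular only in the reversed order, so FKG/Ahlswede--Daykin points the wrong way, and the four values in each summand do not sit on a common $2\times 2$ box, so the $\mtp$ inequality cannot be applied termwise --- and you honestly concede the conjecture may be false. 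But none of the three strategies you propose (lattice transport in the style of Proposition~\ref{prop}, induction on $d$ peeling a tail coordinate, or the four-functions theorem) is carried out, so the gap is not a single missing step but the entire argument. This is an accurate diagnosis of the problem, not a solution to it.

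Two smaller points. The pointwise ordering $q(a)\le u(a),v(a)\le p(a)$ you assert in item (i) is not implied by $\mtp$ (any product density is $\mtp$ with equality in every box, and its four values on a box can be in any order); only the product inequality $u(a)v(a)\le p(a)q(a)$ is available, so do not rely on that monotonicity downstream. Also, the paper's own heuristic route differs from yours: rather than a correlation inequality, the authors suggest approximating an arbitrary $\mtp$ weight on the vertices of a hypercube by uniform distributions on min-max closed sets (for which Proposition~\ref{prop} gives Constraint~A) and passing to the limit; since Constraint~A is quadratic in $\alpha$ and not obviously stable under the relevant combinations or limits, that route is also incomplete. Note finally that by Remark~\ref{remark:ConsA} the case $d=2$ of Constraint~A is exactly the $\mtp$ inequality, so the first genuinely open case is $d=3$; a direct attack on the displayed $d=3$ inequality, or a focused search for a $3$-dimensional counterexample on an $2\times 2\times 2$ grid, would be the most productive next step.
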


While we haven't been  able to prove this conjecture, we know that by Theorem~\ref{thm:main} any convolution of the uniform distribution $U_M$ of a min-max closed set $M$  with  a scaled standard Gaussian is MTP$_2$. Therefore, if we convolve once again with a scaled standard Gaussian, by  associativity of convolution, the resulting density will equal to the original distribution $U_M$ convolved with the convolution of the two scaled standard Gaussians, which is also a scaled standard Gaussian. Thus, we can show Conjecture~\ref{conj} is true for distributions which are obtained as convolutions of $U_M$ and a scaled standard Gaussian.

Moreover, since Conjecture~\ref{conj}  is true for the uniform distribution on a min-max closed set, intuitively, such a distribution could approximate any MTP$_2$ distribution on the vertices of a unit hypercube. A rigorous proof of this statement would yield a proof of Conjecture~\ref{conj}.

\section{Algorithms}\label{sec:algs}
The most intensive task for computing the TPKDE is finding the min-max closure MM$(X)$ of the set of samples $X$.
The cardinality of $MM(X)$ appears to be on the order of (and is certainly upper bounded by) $n^d$, where $n$ is the cardinality of the original set $X$. Therefore computing this set is possibly exponentially hard. Due to the upper bound $n^d$, however, the algorithms are guaranteed to terminate. We designed and implemented several different algorithms for finding MM$(X)$.

The Naive method, Algorithm 1 simply goes through each pair of points in the set and adds their element-wise minima and maxima to $MM(X)$. It then iterates over the points in the current $MM(X)$ and repeats the procedure. It is guaranteed to converge when the min-max closure is found, which is bounded in size by $n^d$. However, its  worst case run-time is $\Omega(n^{2d})$ since we may have to iterate over all $n^d \times n^d$ pairs of points before termination.

\begin{algorithm}[H]
\caption{Naive Algorithm for $MM(X)$}
\begin{algorithmic}[1]
\State $M \leftarrow X$
\State $Run \leftarrow 1$
\While{$Run = 1$}
    \State $oldSize \leftarrow |M|$
    \For{$(i < j \leq oldSize)$} 
        \State add $X_i \wedge X_j$ to $M$
        \State add $X_i \vee X_j$ to $M$
    \EndFor
    \If{$|M| = oldSize$}
         \State $Run \leftarrow 0$
    \EndIf
\EndWhile
\State \Return $M$
\end{algorithmic}
\end{algorithm}

To improve performance over Algorithm 1, we designed Algorithm 2 to utilize the parallel compute capabilities of Nvidia GPUs and the PyCUDA\cite{klockner_pycuda_2012} framework. In describing this algorithm, we take as defined the $MakeGrid$ method (and its inverse $MakeSet$) which takes as input the set $X$ and generates the $d$-dimensional grid representation of all possible $n^d$ points that can be formed by selecting, for $1 < i < d$, the $i$-th coordinate of one of the $n$ points in $X$. We previously mentioned that the min-max closure $MM(X)$ is contained in this finite set of points - as such, this is the worst case result for $MM(X)$, but is much easier to work with on a GPU. The output from $MakeGrid$ is actually further simplified as we can get rid of the numerical value of the points completely and just replace a given coordinate $i$ with its position in the sorted list of $i$-th coordinates of all points in $X$. For example, the coordinate-wise minimum of all points in $X$ can be represented as $(0,0, \dots ,0) \in [n]^d$. This grid is easily represented as an $n$-regular, $d$-dimesnional array (of size $n^d$) where the presence of a point in $X$ is indicated by the value 1 at its representative index in the array which is filled with 0s otherwise. We can add a point to the set represented by this grid by simply setting the desired index to 1, and write $G.SetOne(i)$ to represent setting index $i\in[n]^d$ of grid $G$ to 1. We also take as defined the $GetPoints$ method, which simply returns the $d$-dimensional indices of all the entries in the set which are 1, that is, a list of all the points (represented in $[n]^d$) currently in the set described by the grid $G$.


For Algorithm 2, we structure the computation to be more GPU-friendly than in Algorithm 1. To do this, we simply split the loop over all pairs of points into two nested loops and only compute the first one in a parallel manner. This allows parts of the computation to be done in parallel on a GPU.
\begin{algorithm}[H]
\caption{Optimized Parallel Algorithm for $MM(X)$}
\begin{algorithmic}[1]
\State $G \leftarrow MakeGrid(X)$
\State $P \leftarrow GetPoints(G)$
\State $Run \leftarrow 1$
\While{$Run = 1$}
    \State $oldSize \leftarrow |P|$
    \For{$pt_1$ in $P$} \Comment{This for loop is parallel}
        \For{$pt_2$ in $P$} \Comment{only consider unique pairs}
            \State $G.SetOne(pt_1 \vee pt_2)$
            \State $G.SetOne(pt_1 \wedge pt_2)$
        \EndFor
    \EndFor
    \State $P \leftarrow GetPoints(G)$
        \If{$|P| = oldSize$}
        \State $Run \leftarrow 0$
    \EndIf
\EndWhile
\State \Return $MakeSet(G)$
\end{algorithmic}
\end{algorithm}
The worst-case run-time for both algorithms is $\Omega(n^d)$ since in the worst case we do have to return $n^d$ points. However we see very significant performance improvements using Algorithm 2 over Algorithm 1, and we discuss the actual speedups in the next section.

\section{Experiments}\label{sec:experiments}

\subsection{Min-max closure Algorithm Performance}\label{sec:perf}
We compare the performance of Algorithms 1 and 2 below using randomly generated standard multivariate Gaussian samples of size $n$ and dimensionality $d$ and then using the algorithm to compute the min-max closure the sample set. The relative speed of the algorithms is given in Tables \ref{table:speed2d}-\ref{table:speed4d}.

\begin{table}[H]
\caption{Speedup over Algorithm 1 for $d = 2$}
\centering
\begin{tabular}{c c c c c}
\hline\hline
 & $n=40$ & $n=50$ & $n=60$ & $n=80$\\ [0.5ex]
\hline
Algorithm 2 & $96.33\times$ & $159.49\times$ & $231.96\times$ & $426.04\times$\\
Algorithm 1 & $1.00\times$ & $1.00\times$ & $1.00\times$ & $1.00\times$\\ [1ex]
\hline
\end{tabular}\label{table:speed2d}

\caption{Speedup over Algorithm 1 for $d = 3$}
\centering
\begin{tabular}{c c c c c}
\hline\hline
 & $n=10$ & $n=15$ & $n=20$ & $n=25$\\ [0.5ex]
\hline
Algorithm 2 & $15.52\times$ & $103.48\times$ & $313.72\times$ & $648.08\times$\\
Algorithm 1 & $1.00\times$ & $1.00\times$ & $1.00\times$ & $1.00\times$\\ [1ex]
\hline
\end{tabular}\label{table:speed3d}

\caption{Speedup over Algorithm 1 for $d = 4$}
\centering
\begin{tabular}{c c c c c}
\hline\hline
 & $n=10$ & $n=12$ & $n=15$ & $n=20$\\ [0.5ex]
\hline
Algorithm 2 & $101.31\times$ & $248.82\times$ & $1089.23\times$ & $3089.55\times$\\
Algorithm 1 & $1.00\times$ & $1.00\times$ & $1.00\times$ & $1.00\times$\\ [1ex]
\hline
\end{tabular}\label{table:speed4d}
\end{table}

As we can see, the parallel Algorithm 2 can lead to significant (up to several thousand times) faster computation of the min-max closure of the sample set needed for the TPKDE.

\subsection{Statistical Performance of TPKDE}\label{sec:perf}
In order to evaluate the statistical performance and convergence of our estimator, we first need a method of generating MTP$_2$ densities which KDE or TPKDE can try to estimate. In our experiments, we restrict our attention to the case of Gaussian MTP$_2$ densities, and use the previously mentioned result from \cite{karlinGaussian} that a multivariate Gaussian density is MTP$_2$ if and only if the inverse of its covariance matrix is an {\em M-matrix}. We can therefore computationally generate pseudo-random Gaussian MTP$_2$ densities by using a mean vector that is sampled from a standard Gaussian and generating an M-matrix to use as the inverse covariance. To generate such an inverse covariance matrix, we sample the values on the diagonal from a standard Gaussian, and the off-diagonal entries are sampled from the negative of the absolute value of a standard Gaussian - this is to ensure the off-diagonal entries are non-positive as required for an M-matrix. We repeat this procedure until the resulting covariance matrix we obtain is positive semi-definite and can thus produce a well-formed MTP$_2$ Gaussian density.

Once we have a Gaussian MTP$_2$ density which we denote by $f_0$, we can sample from it and calculate the KDE (say $\hat{f}$) and TPKDE (say $\hat{f}_{TP}$). Throughout, we use Silvermann's Rule\cite{silverman_1986} to calculate the bandwidth for both estimators. We calculate the error of each estimator using a Monte Carlo approach, that is, by sampling $Y_1,\ldots, Y_s$ from the true distribution $f_0$ and calculating the average of
$$
   (f_0(Y_i) - f_E(Y_i))^2
$$
where $f_E$ is the estimated density (either $\hat{f}$ or $\hat{f}_{TP}$). We then take the square root of this value to obtain the estimated root mean square error.
\begin{figure}[H]
    \includegraphics[width=\textwidth]{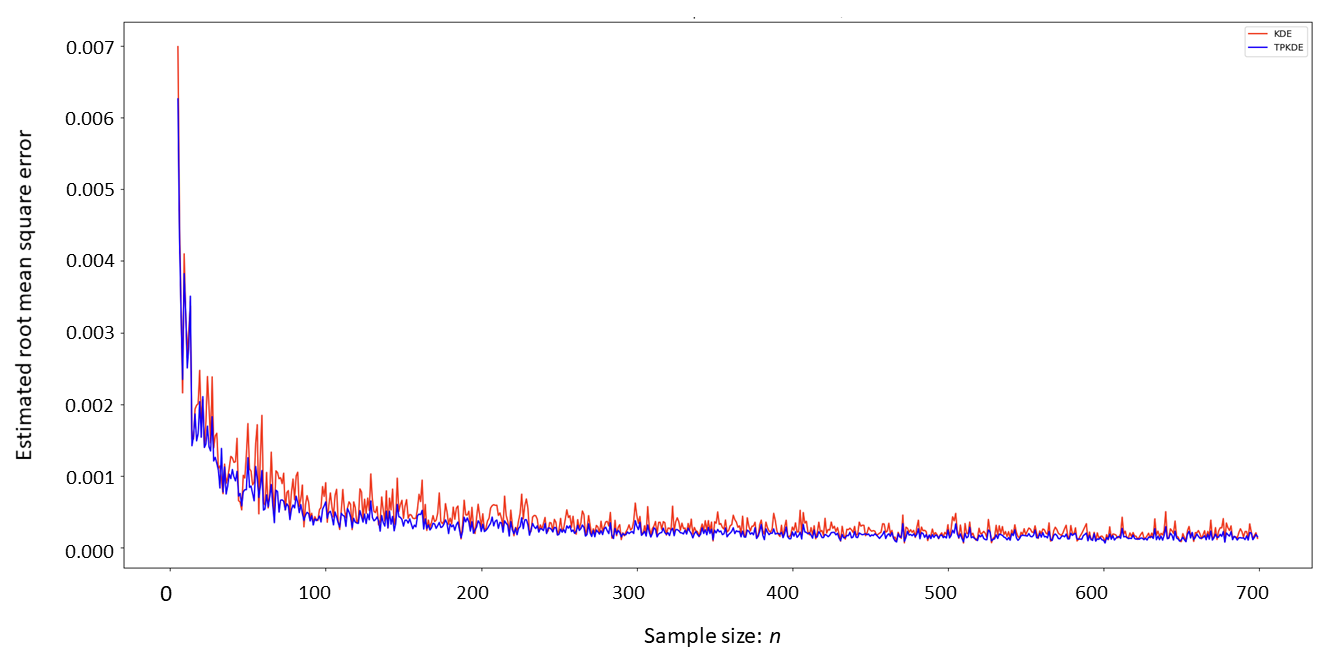}
    \vspace{-0.75cm}
    \caption{Root mean squared error, $d = 2$}
    \label{fig:1}
\end{figure}
 Figures \ref{fig:1}, \ref{fig:2}, and \ref{fig:3} show the comparison of the errors for $d=2$, $d=3$, and $d=4$ respectively.
\begin{figure}[H]
    \includegraphics[width=\textwidth]{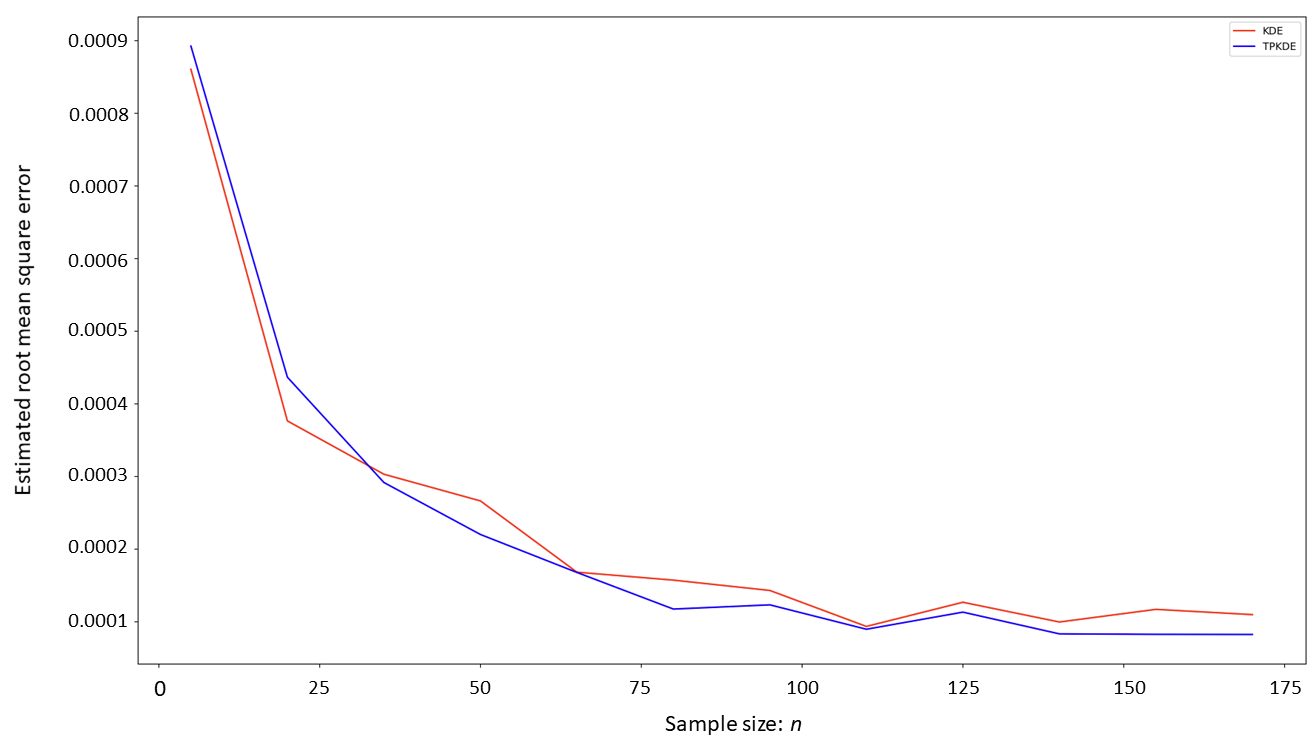}
    \vspace{-0.9cm}
    \caption{Root mean squared error, $d = 3$}
    \label{fig:2}
\end{figure}
\begin{figure}[H]
    \includegraphics[width=\textwidth]{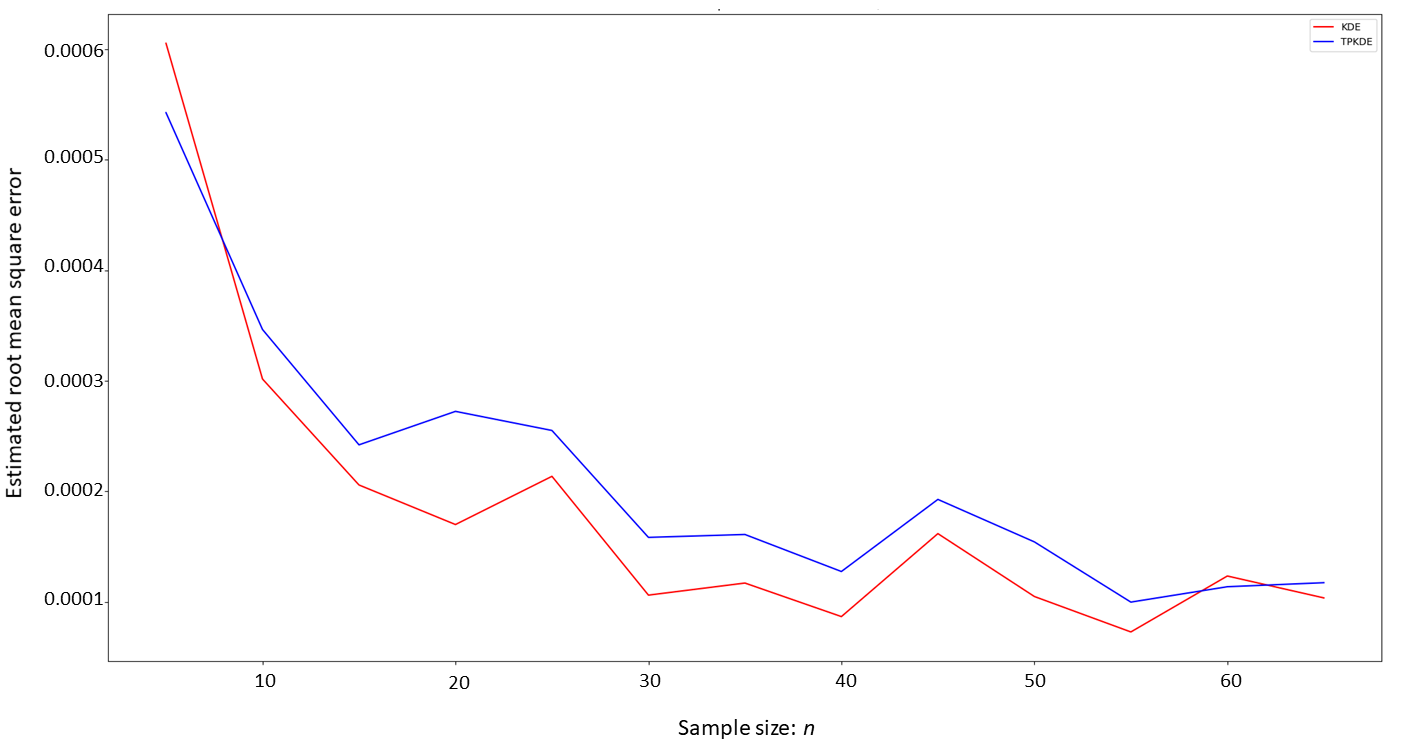}
    \vspace{-0.9cm}
    \caption{Root mean squared error, $d = 4$}
    \label{fig:3}
\end{figure}
As we can see from the plots, it seems that the TPKDE is consistent as the error decreases at a similar rate to the KDE (which is known to be consistent). In fact, in most instances, the TPKDE seems to do better than the standard KDE, with slightly lower errors for a given sample size, $n$. Perhaps, depending on the density $f_0$, as $n$ becomes larger, the TPKDE will perform significantly better than the KDE.

\section{Conclusion}\label{sec:conclusion}
In this paper we considered estimating a density under the constraint that it is MTP$_2$. We proposed augmenting the set of i.i.d. samples and then convolving with a standard Gaussian kernel, thus, yielding our {\em totally positive kernel density estimator}. In particular, because of the nature of the MTP$_2$ condition, instead of considering the original sample of points $X$, we consider the min-max closure, MM$(X)$. Then, our estimator is the convolution of the uniform distribution on MM$(X)$ with the standard Gaussian kernel. One of our main results, Theorem~\ref{thm:main}, shows that this convolution does indeed yield an MTP$_2$ function, and therefore, our estimator is proper. It is not in general true that convolutions of MTP$_2$ functions are MTP$_2$. In Theorem~\ref{thm:conv} we give a \textit{sufficient} condition for when the convolution of a function with a standard Gaussian yields an MTP$_2$ function. This raises an interesting general theoretical question.

\begin{question}
What are necessary conditions on $\alpha:\mathbb R^d\to \mathbb R$ so that the convolution  $\int_{x\in\mathbb R^d}\alpha(\mathbf x)N^h_d(\mathbf x-\mathbf a)d\mathbf x$ is an MTP$_2$ function?
\end{question}


\begin{problem}
Develop the necessary mathematical tools to show consistency of the totally positive kernel density estimator, as well as, of other estimators which rely on a highly dependent set of samples.
\end{problem}

\begin{problem}
Compute the statistical rate of convergence of the TPKDE to the original true density.
\end{problem}

We expect the statistical rate of convergence to be highly dependant on $h$, the choice of bandwidth, which raises the question
\begin{question} What is the optimal choice of bandwidth, $h^*$, of the TPKDE?
\end{question}

We conjecture that due to the large size of MM$(X)$, this statistical rate will not suffer from the curse of dimensionality. The size of MM$(X)$ appears to be on the order of $n^d$, although this is also an interesting mathematical question that needs a proof.

\begin{problem} Compute the expected size of MM$(X)$ when $X$ is drawn i.i.d. from a distribution $f_0$. How do different types of distributions $f_0$ influence this expected size?
\end{problem}

Finally, perhaps we could use a different MTP$_2$ density on MM$(X)$ instead of the uniform one to get better convergence. For this we would have to first prove Conjecture~\ref{conj}.

\begin{problem} 
Are there MTP$_2$ distributions on MM$(X)$ other  than the uniform one that would lead to faster convergence of the TPKDE?
\end{problem}

We are hopeful that the resolution of the above mentioned problems would lead to interesting practical and theoretical advances. 

\section{Acknowledgements}\label{sec:acknowledgements}
Elina Robeva was supported by an NSF postdoctoral fellowship (DMS-170-3821) and an NSERC Discovery Grant (DGECR-2020-00338). Ali Zartash was supported by an Undergraduate Research Opportunity (UROP) at MIT.


\bibliographystyle{alpha}
\bibliography{non_parametric}

\appendix

\section{Proof of Theorem~\ref{thm:conv}}\label{appendix:A}
\begin{proof}[Proof of Theorem~\ref{thm:conv}]
We want to show that $C(\cdot)$, which is the convolution of a scaled standard multivariate Gaussian $N_d^h(\cdot)$ and distribution $Q$ on $\mathbb R^d$ which either has density $\alpha$ on $\mathbb R^d$ or is discrete with weights $\alpha(x) = Q(\{x\})$ is MTP$_2$ when $\alpha(\cdot)$ satisfies Constraint A. We know that $C(\cdot) : \R^d \rightarrow \R^+$ is strictly positive everywhere since it is a convolution involving a Gaussian kernel. The result by Karlin and Rinott in \cite{KR} states a $d$-dimensional strictly positive function is MTP$_2$ if and only if it is MTP$_2$ in 2 coordinates, with the rest held constant (i.e. pairwise MTP$_2$ property implies the global MTP$_2$ property for strictly positive measures). Thus it is sufficient to show that $C(\cdot)$ is MTP$_2$ in the first two coordinates as that would imply pairwise MTP$_2$ since the multivariate Gaussian kernel is symmetric in all coordinates.

Specifically we need that for all $a_1, a_2, b_1, b_2 \in \R$ with $a_1 > a_2$ and $b_1 > b_2$, and some fixed $c_{3}^{d} \in \R^{d-2}$, where use the notation $c_{3}^{d}$ to denote $(c_3, \dots , c_{d})$, the condition
$$
C(a_1, b_1, c_{3}^{d})C(a_2, b_2, c_{3}^{d}) \geq C(a_1, b_2, c_{3}^{d})C(a_2, b_1, c_{3}^{d})
$$
is satisfied. We have described $Q$ as a distribution which either is either density $\alpha$ on $\mathbb R^d$ or is discrete with weights $\alpha(x) = Q(\{x\})$. The latter case applies in our estimator since it takes $\alpha$ to be the uniform distribution over a finite set (specifically the min-max closure of the sample set). The proof is almost identical in both cases, as we can replace integrals with summations throughout the following analysis. In the case where $\alpha$ is a density, we can expand $C(\cdot)$ using the definition of convolution in \eqref{eqn:conv} to get the condition that
{\fontsize{8}{2}\selectfont
\begin{align*}
    &\left( \idotsint\displaylimits_{x_1, \dots ,x_d} \alpha \left(\mathbf{x}\right) N_d^h\left(a_1 - x_1, b_1 - x_2, (c-x)_{3}^d\right)d\mathbf{x} \right) 
    \left( \idotsint\displaylimits_{x_1, \dots ,x_d} \alpha \left(\mathbf{x}\right) N_d^h\left(a_2 - x_1, b_2 - x_2, (c-x)_{3}^d\right)d\mathbf{x} \right) \nonumber \\
    &\geq \left( \idotsint\displaylimits_{x_1, \dots ,x_d} \alpha \left(\mathbf{x}\right) N_d^h\left(a_1 - x_1, b_2 - x_2, (c-x)_{3}^d\right)d\mathbf{x} \right) 
    \left( \idotsint\displaylimits_{x_1, \dots ,x_d} \alpha \left(\mathbf{x}\right) N_d^h\left(a_2 - x_1, b_1 - x_2, (c-x)_{3}^d\right)d\mathbf{x} \right)
\end{align*}
}%
where we use the notation $(x-c)_3^d$ to denote $(x_3 - c_3, \dots, x_d - c_d)$, and $\mathbf{x}$ for $(x_1,\dots,x_n)$. We also drop $dx_1\dots dx_d$  throughout the proof to ease notation and since the domain of integration is clearly stated throughout.\\
We can multiply both sides by $\frac{1}{{2\pi h^d}}$ and use vector norm notation, where we use $\begin{Vmatrix}\cdot \end{Vmatrix}$ to denote the $L_2$ norm. We also use the simple integral $\int$ notation instead of $\idotsint$ for simpler notation since the domain is clearly stated. We get
\begin{align}
    &\left( \int\displaylimits_{\mathbf{x} \in \R^d} \alpha(\mathbf{x}) e^{\frac{-1}{2h}\left(
    \begin{Vmatrix}
    a_1 - x_1\\
    b_1 - x_2\\
    c_3^d - x_3^d
    \end{Vmatrix}
    \right)} \right)
    \left( \int\displaylimits_{\mathbf{x} \in \R^d} \alpha(\mathbf{x}) e^{\frac{-1}{2h}\left(
    \begin{Vmatrix}
    a_2 - x_1\\
    b_2 - x_2\\
    c_3^d - x_3^d
    \end{Vmatrix}
    \right)} \right)\nonumber\\
    & \geq
    \left( \int\displaylimits_{\mathbf{x} \in \R^d} \alpha(\mathbf{x}) e^{\frac{-1}{2h}\left(
    \begin{Vmatrix}
    a_1 - x_1\\
    b_2 - x_2\\
    c_3^d - x_3^d
    \end{Vmatrix}
    \right)} \right)
    \left( \int\displaylimits_{\mathbf{x} \in \R^d} \alpha(\mathbf{x}) e^{\frac{-1}{2h}\left(
    \begin{Vmatrix}
    a_2 - x_1\\
    b_1 - x_2\\
    c_3^d - x_3^d
    \end{Vmatrix}
    \right)} \right).
\end{align}

We denote $c_3^d$ by $\mathbf{c} \in \R^{d-2}$. We now rename $x_1$, $x_2$, and $x_3^d$ to $x_i$, $x_j$, and $\mathbf{m} \in \R^{d-2}$ respectively for the factors on the left and also rename $x_1$, $x_2$, and $x_3^d$ to $x_k$, $x_l$, and $\mathbf{n} \in \R^{d-2}$ respectively for the factors on the right. Splitting the factors in the integrals we get
{\fontsize{9}{10}\selectfont \thinmuskip=0mu\medmuskip=0mu\thickmuskip=0mu
\begin{align*}
    &\left( \int\displaylimits_{\mathbf{m} \in \R^{d-2}}
    e^{\frac{-1}{2h}\begin{Vmatrix}\mathbf{c} - \mathbf{m}\end{Vmatrix}}
    \int\displaylimits_{x_i,x_j \in \R}
    \alpha(x_i,x_j,\mathbf{m}) e^{\frac{-1}{2h}\left(
    \begin{Vmatrix}
    a_1 - x_i\\
    b_1 - x_j
    \end{Vmatrix}
    \right)} \right)
    \left( \int\displaylimits_{\mathbf{n} \in \R^{d-2}}
    e^{\frac{-1}{2h}\begin{Vmatrix}\mathbf{c} - \mathbf{n}\end{Vmatrix}}
    \int\displaylimits_{x_k,x_l \in \R}
    \alpha(x_k,x_l,\mathbf{n}) e^{\frac{-1}{2h}\left(
    \begin{Vmatrix}
    a_2 - x_k\\
    b_2 - x_l
    \end{Vmatrix}
    \right)} \right) \nonumber\\
    & \geq
    \left( \int\displaylimits_{\mathbf{m} \in \R^{d-2}}
    e^{\frac{-1}{2h}\begin{Vmatrix}\mathbf{c} - \mathbf{m}\end{Vmatrix}}
    \int\displaylimits_{x_i,x_j \in \R}
    \alpha(x_i,x_j,\mathbf{m}) e^{\frac{-1}{2h}\left(
    \begin{Vmatrix}
    a_1 - x_i\\
    b_2 - x_j
    \end{Vmatrix}
    \right)} \right)
    \left( \int\displaylimits_{\mathbf{n} \in \R^{d-2}}
    e^{\frac{-1}{2h}\begin{Vmatrix}\mathbf{c} - \mathbf{n}\end{Vmatrix}}
    \int\displaylimits_{x_k,x_l \in \R}
     \alpha(x_k,x_l,\mathbf{n}) e^{\frac{-1}{2h}\left(
    \begin{Vmatrix}
    a_2 - x_k\\
    b_1 - x_l
    \end{Vmatrix}
    \right)} \right).\\
\end{align*}  
}%
Combining the factors into one integral and using the notation $\alpha_{ijm}$ for $\alpha(x_i,x_j,\mathbf{m})$, we get
{
\begin{align}
    \nonumber
    \int\displaylimits_{\mathbf{m,n} \in \R^{d-2}}
    e^{\frac{-1}{2h}\begin{Vmatrix}\mathbf{c} - \mathbf{m}\\\mathbf{c} - \mathbf{n}\end{Vmatrix}}
    &\Bigg[
    \int\displaylimits_{x_i,x_j,x_k,x_l \in \R}
    \alpha_{ijm}\alpha_{kln}
    e^{\frac{-1}{2h}\left(
    \begin{Vmatrix}
    a_1 - x_i\\
    b_1 - x_j
    \end{Vmatrix}
    +
    \begin{Vmatrix}
    a_2 - x_k\\
    b_2 - x_l
    \end{Vmatrix}
    \right)}\\
    &-
    \int\displaylimits_{x_i,x_j,x_k,x_l \in \R}
    \alpha_{ijm}\alpha_{kln}
    e^{\frac{-1}{2h}\left(
    \begin{Vmatrix}
    a_1 - x_i\\
    b_2 - x_j
    \end{Vmatrix}
    +
    \begin{Vmatrix}
    a_2 - x_k\\
    b_1 - x_l
    \end{Vmatrix}
    \right)}
    \Bigg]
    \geq 0.
\end{align}
}%
Let us now examine the inner integrals in (A.2) in more detail by dividing into the regions of integration. We can divide the region of $\int\displaylimits_{x_i,x_j,x_k,x_l \in \R}$ into 7 cases corresponding to each of\\\\
Case 1: $\int\displaylimits_{\substack{x_i > x_k\\ x_j > x_l}}$, 
Case 2: $\int\displaylimits_{\substack{x_i > x_k\\ x_j < x_l}}$, 
Case 3: $\int\displaylimits_{\substack{x_i < x_k\\ x_j > x_l}}$, 
Case 4: $\int\displaylimits_{\substack{x_i < x_k\\ x_j < x_l}}$, 
Case 5: $\int\displaylimits_{\substack{x_i = x_k\\ x_j = x_l}}$ \\\\
Case 6: $\int\displaylimits_{\substack{x_i = x_k\\ x_j \neq x_l}}$, and 
Case 7: $\int\displaylimits_{\substack{x_i\neq x_k\\ x_j = x_l}}$.
\\\\
We will look at the different cases separately. For Case 5 ($\int\displaylimits_{\substack{x_i = x_k\\ x_j = x_l}}$) we have
\begin{align*}
    \int\displaylimits_{\mathbf{m,n} \in \R^{d-2}}
    e^{\frac{-1}{2h}\begin{Vmatrix}\mathbf{c} - \mathbf{m}\\\mathbf{c} - \mathbf{n}\end{Vmatrix}}
    &\Bigg[
    \int\displaylimits_{\substack{x_i = x_k\\ x_j = x_l} }
    \alpha_{ijm}\alpha_{kln}
    e^{\frac{-1}{2h}\left(
    \begin{Vmatrix}
    a_1 - x_i\\
    b_1 - x_j
    \end{Vmatrix}
    +
    \begin{Vmatrix}
    a_2 - x_k\\
    b_2 - x_l
    \end{Vmatrix}
    \right)}\\
    &-
    \int\displaylimits_{\substack{x_i = x_k\\ x_j = x_l} }
    \alpha_{ijm}\alpha_{kln}
    e^{\frac{-1}{2h}\left(
    \begin{Vmatrix}
    a_1 - x_i\\
    b_2 - x_j
    \end{Vmatrix}
    +
    \begin{Vmatrix}
    a_2 - x_k\\
    b_1 - x_l
    \end{Vmatrix}
    \right)}
    \Bigg]
    &= 0
\end{align*}
and this vanishes since the inner terms are the same. For Case 7 ($\int\displaylimits_{\substack{x_i\neq x_k\\ x_j = x_l}}$) we can see that
{\fontsize{10}{11}\selectfont \thinmuskip=0mu\medmuskip=0mu\thickmuskip=0mu
\begin{align*}
    \int\displaylimits_{\mathbf{m,n} \in \R^{d-2}}
    e^{\frac{-1}{2h}\begin{Vmatrix}\mathbf{c} - \mathbf{m}\\\mathbf{c} - \mathbf{n}\end{Vmatrix}}
    \Bigg[
    &\int\displaylimits_{\substack{x_i\neq x_k\\ x_j = x_l} }
    \alpha_{ijm}\alpha_{kln}
    e^{\frac{-1}{2h}\left(
    \begin{Vmatrix}
    a_1 - x_i\\
    b_1 - x_j
    \end{Vmatrix}
    +
    \begin{Vmatrix}
    a_2 - x_k\\
    b_2 - x_l
    \end{Vmatrix}
    \right)}\\
    &-
    \int\displaylimits_{\substack{x_i\neq x_k\\ x_j = x_l} }
    \alpha_{ijm}\alpha_{kln}
    e^{\frac{-1}{2h}\left(
    \begin{Vmatrix}
    a_1 - x_i\\
    b_2 - x_j
    \end{Vmatrix}
    +
    \begin{Vmatrix}
    a_2 - x_k\\
    b_1 - x_l
    \end{Vmatrix}
    \right)}
    \Bigg]
    \\
    =
    \int\displaylimits_{\mathbf{m,n} \in \R^{d-2}}
    e^{\frac{-1}{2h}\begin{Vmatrix}\mathbf{c} - \mathbf{m}\\\mathbf{c} - \mathbf{n}\end{Vmatrix}}
    \Bigg[
    &\int\displaylimits_{\substack{x_i\neq x_k\\ x_j} }
    \alpha_{ijm}\alpha_{kjn}
    e^{\frac{-1}{2h}\left(
    \begin{Vmatrix}
    a_1 - x_i\\
    b_1 - x_j
    \end{Vmatrix}
    +
    \begin{Vmatrix}
    a_2 - x_k\\
    b_2 - x_j
    \end{Vmatrix}
    \right)}\\
    &-
    \int\displaylimits_{\substack{x_i\neq x_k\\ x_j} }
    \alpha_{ijm}\alpha_{kjn}
    e^{\frac{-1}{2h}\left(
    \begin{Vmatrix}
    a_1 - x_i\\
    b_2 - x_j
    \end{Vmatrix}
    +
    \begin{Vmatrix}
    a_2 - x_k\\
    b_1 - x_j
    \end{Vmatrix}
    \right)}
    \Bigg] = 0
\end{align*}
}
And for Case 6 ($\int\displaylimits_{\substack{x_i = x_k\\ x_j \neq x_l}}$) we can simplify the inner terms as
{\fontsize{9}{10}\selectfont \thinmuskip=0mu\medmuskip=0mu\thickmuskip=0mu
\begin{align*}
    &\int\displaylimits_{\substack{x_i = x_k\\ x_j \neq x_l} }
    \alpha_{ijm}\alpha_{kln}
    e^{\frac{-1}{2h}\left(
    \begin{Vmatrix}
    a_1 - x_i\\
    b_1 - x_j
    \end{Vmatrix}
    +
    \begin{Vmatrix}
    a_2 - x_k\\
    b_2 - x_l
    \end{Vmatrix}
    \right)}
    -
    \int\displaylimits_{\substack{x_i = x_k\\ x_j \neq x_l} }
    \alpha_{ijm}\alpha_{kln}
    e^{\frac{-1}{2h}\left(
    \begin{Vmatrix}
    a_1 - x_i\\
    b_2 - x_j
    \end{Vmatrix}
    +
    \begin{Vmatrix}
    a_2 - x_k\\
    b_1 - x_l
    \end{Vmatrix}
    \right)}\\
    &=
    \int\displaylimits_{\substack{x_i\\ x_j \neq x_l} }
    \alpha_{ijm}\alpha_{iln}
    e^{\frac{-1}{2h}\left(
    \begin{Vmatrix}
    a_1 - x_i\\
    b_1 - x_j
    \end{Vmatrix}
    +
    \begin{Vmatrix}
    a_2 - x_i\\
    b_2 - x_l
    \end{Vmatrix}
    \right)}
    -
    \int\displaylimits_{\substack{x_i\\ x_j \neq x_l} }
    \alpha_{ijm}\alpha_{iln}
    e^{\frac{-1}{2h}\left(
    \begin{Vmatrix}
    a_1 - x_i\\
    b_2 - x_j
    \end{Vmatrix}
    +
    \begin{Vmatrix}
    a_2 - x_i\\
    b_1 - x_l
    \end{Vmatrix}
    \right)}\\
    &=
    \int\displaylimits_{\substack{x_i\\ x_j \neq x_l} }
    \alpha_{ijm}\alpha_{iln}
    e^{\frac{-1}{2h}\left(
    \begin{Vmatrix}
    a_1 - x_i\\
    b_1 - x_j
    \end{Vmatrix}
    +
    \begin{Vmatrix}
    a_2 - x_i\\
    b_2 - x_l
    \end{Vmatrix}
    \right)}
    -
    \int\displaylimits_{\substack{x_i\\ x_l \neq x_j} }
    \alpha_{ilm}\alpha_{ijn}
    e^{\frac{-1}{2h}\left(
    \begin{Vmatrix}
    a_1 - x_i\\
    b_2 - x_l
    \end{Vmatrix}
    +
    \begin{Vmatrix}
    a_2 - x_i\\
    b_1 - x_j
    \end{Vmatrix}
    \right)}\\
    &=
    \int\displaylimits_{\substack{x_i\\ x_l \neq x_j} }
    (\alpha_{ijm}\alpha_{iln} - \alpha_{ijn}\alpha_{ilm})
    e^{\frac{-1}{2h}\left(
    \begin{Vmatrix}
    a_1 - x_i\\
    b_2 - x_l
    \end{Vmatrix}
    +
    \begin{Vmatrix}
    a_2 - x_i\\
    b_1 - x_j
    \end{Vmatrix}
    \right)}.
\end{align*}
}
So for this case we get
\begin{align}
    \int\displaylimits_{\mathbf{m,n} \in \R^{d-2}}
    e^{\frac{-1}{2h}\begin{Vmatrix}\mathbf{c} - \mathbf{m}\\\mathbf{c} - \mathbf{n}\end{Vmatrix}}
    \Bigg[
    \int\displaylimits_{\substack{x_i\\ x_l \neq x_j} }
    (\alpha_{ijm}\alpha_{iln} - \alpha_{ijn}\alpha_{ilm})
    e^{\frac{-1}{2h}\left(
    \begin{Vmatrix}
    a_1 - x_i\\
    b_2 - x_l
    \end{Vmatrix}
    +
    \begin{Vmatrix}
    a_2 - x_i\\
    b_1 - x_j
    \end{Vmatrix}
    \right)}
    \Bigg] = 0
\end{align}
This is because the outer integral
$
\int\displaylimits_{\mathbf{m,n} \in \R^{d-2}}
$
over $\mathbf{m,n} \in \R^{d-2}$ is symmetric in $m,n$, and so we can switch $m$ and $n$ in one of the terms. Hence, $\int\displaylimits_{\mathbf{m,n} \in \R^{d-2}} (\alpha_{ijm}\alpha_{iln} - \alpha_{ijn}\alpha_{ilm}) = 0$, and this implies (A.3).
\\\\
Now for the remaining cases i.e. Case 1, 2, 3, and 4, we have
{\fontsize{10}{11}\selectfont \thinmuskip=0mu\medmuskip=0mu\thickmuskip=0mu
\begin{align*}
    \int\displaylimits_{\mathbf{m,n} \in \R^{d-2}}
    e^{\frac{-1}{2h}\begin{Vmatrix}\mathbf{c} - \mathbf{m}\\\mathbf{c} - \mathbf{n}\end{Vmatrix}}
    \Bigg[
    &\int\displaylimits_{\substack{x_i > x_k\\ x_j < x_l}}
    \alpha_{ijm}\alpha_{kln}
    e^{\frac{-1}{2h}\left(
    \begin{Vmatrix}
    a_1 - x_i\\
    b_1 - x_j
    \end{Vmatrix}
    +
    \begin{Vmatrix}
    a_2 - x_k\\
    b_2 - x_l
    \end{Vmatrix}
    \right)}
    -
    \int\displaylimits_{\substack{x_i > x_k\\ x_j < x_l} }
    \alpha_{ijm}\alpha_{kln}
    e^{\frac{-1}{2h}\left(
    \begin{Vmatrix}
    a_1 - x_i\\
    b_2 - x_j
    \end{Vmatrix}
    +
    \begin{Vmatrix}
    a_2 - x_k\\
    b_1 - x_l
    \end{Vmatrix}
    \right)} +\\
    &\int\displaylimits_{\substack{x_i > x_k\\ x_j < x_l}}
    \alpha_{ijm}\alpha_{kln}
    e^{\frac{-1}{2h}\left(
    \begin{Vmatrix}
    a_1 - x_i\\
    b_1 - x_j
    \end{Vmatrix}
    +
    \begin{Vmatrix}
    a_2 - x_k\\
    b_2 - x_l
    \end{Vmatrix}
    \right)}
    -
    \int\displaylimits_{\substack{x_i > x_k\\ x_j > x_l}}
    \alpha_{ijm}\alpha_{kln}
    e^{\frac{-1}{2h}\left(
    \begin{Vmatrix}
    a_1 - x_i\\
    b_2 - x_j
    \end{Vmatrix}
    +
    \begin{Vmatrix}
    a_2 - x_k\\
    b_1 - x_l
    \end{Vmatrix}
    \right)} + \\
    &\int\displaylimits_{\substack{x_i < x_k\\ x_j > x_l}}
    \alpha_{ijm}\alpha_{kln}
    e^{\frac{-1}{2h}\left(
    \begin{Vmatrix}
    a_1 - x_i\\
    b_1 - x_j
    \end{Vmatrix}
    +
    \begin{Vmatrix}
    a_2 - x_k\\
    b_2 - x_l
    \end{Vmatrix}
    \right)}
    -
    \int\displaylimits_{\substack{x_i < x_k\\ x_j < x_l} }
    \alpha_{ijm}\alpha_{kln}
    e^{\frac{-1}{2h}\left(
    \begin{Vmatrix}
    a_1 - x_i\\
    b_2 - x_j
    \end{Vmatrix}
    +
    \begin{Vmatrix}
    a_2 - x_k\\
    b_1 - x_l
    \end{Vmatrix}
    \right)} +\\
    &\int\displaylimits_{\substack{x_i < x_k\\ x_j < x_l} }
    \alpha_{ijm}\alpha_{kln}
    e^{\frac{-1}{2h}\left(
    \begin{Vmatrix}
    a_1 - x_i\\
    b_1 - x_j
    \end{Vmatrix}
    +
    \begin{Vmatrix}
    a_2 - x_k\\
    b_2 - x_l
    \end{Vmatrix}
    \right)}
    -
    \int\displaylimits_{\substack{x_i < x_k\\ x_j > x_l} }
    \alpha_{ijm}\alpha_{kln}
    e^{\frac{-1}{2h}\left(
    \begin{Vmatrix}
    a_1 - x_i\\
    b_2 - x_j
    \end{Vmatrix}
    +
    \begin{Vmatrix}
    a_2 - x_k\\
    b_1 - x_l
    \end{Vmatrix}
    \right)}
    \Bigg].
\end{align*}
}
Now with renaming indices and manipulation of terms, we get
{\fontsize{10}{11}\selectfont \thinmuskip=0mu\medmuskip=0mu\thickmuskip=0mu
\begin{align*}
    \int\displaylimits_{\mathbf{m,n} \in \R^{d-2}}
    e^{\frac{-1}{2h}\begin{Vmatrix}\mathbf{c} - \mathbf{m}\\\mathbf{c} - \mathbf{n}\end{Vmatrix}}
    \Bigg[
    &\int\displaylimits_{\substack{x_i > x_k\\ x_j > x_l}}
    \alpha_{ijm}\alpha_{kln}
    e^{\frac{-1}{2h}\left(
    \begin{Vmatrix}
    a_1 - x_i\\
    b_1 - x_j
    \end{Vmatrix}
    +
    \begin{Vmatrix}
    a_2 - x_k\\
    b_2 - x_l
    \end{Vmatrix}
    \right)}
    -
    \int\displaylimits_{\substack{x_i > x_k\\ x_j > x_l} }
    \alpha_{ilm}\alpha_{kjn}
    e^{\frac{-1}{2h}\left(
    \begin{Vmatrix}
    a_1 - x_i\\
    b_2 - x_l
    \end{Vmatrix}
    +
    \begin{Vmatrix}
    a_2 - x_k\\
    b_1 - x_j
    \end{Vmatrix}
    \right)} +\\
    &\int\displaylimits_{\substack{x_i > x_k\\ x_j > x_l}}
    \alpha_{ilm}\alpha_{kjn}
    e^{\frac{-1}{2h}\left(
    \begin{Vmatrix}
    a_1 - x_i\\
    b_1 - x_l
    \end{Vmatrix}
    +
    \begin{Vmatrix}
    a_2 - x_k\\
    b_2 - x_j
    \end{Vmatrix}
    \right)}
    -
    \int\displaylimits_{\substack{x_i > x_k\\ x_j > x_l}}
    \alpha_{ijm}\alpha_{kln}
    e^{\frac{-1}{2h}\left(
    \begin{Vmatrix}
    a_1 - x_i\\
    b_2 - x_j
    \end{Vmatrix}
    +
    \begin{Vmatrix}
    a_2 - x_k\\
    b_1 - x_l
    \end{Vmatrix}
    \right)} + \\
    &\int\displaylimits_{\substack{x_i > x_k\\ x_j > x_l}}
    \alpha_{kjm}\alpha_{iln}
    e^{\frac{-1}{2h}\left(
    \begin{Vmatrix}
    a_1 - x_k\\
    b_1 - x_j
    \end{Vmatrix}
    +
    \begin{Vmatrix}
    a_2 - x_i\\
    b_2 - x_l
    \end{Vmatrix}
    \right)}
    -
    \int\displaylimits_{\substack{x_i > x_k\\ x_j > x_l} }
    \alpha_{klm}\alpha_{ijn}
    e^{\frac{-1}{2h}\left(
    \begin{Vmatrix}
    a_1 - x_k\\
    b_2 - x_l
    \end{Vmatrix}
    +
    \begin{Vmatrix}
    a_2 - x_i\\
    b_1 - x_j
    \end{Vmatrix}
    \right)} +\\
    &\int\displaylimits_{\substack{x_i > x_k\\ x_j > x_l} }
    \alpha_{klm}\alpha_{ijn}
    e^{\frac{-1}{2h}\left(
    \begin{Vmatrix}
    a_1 - x_k\\
    b_1 - x_l
    \end{Vmatrix}
    +
    \begin{Vmatrix}
    a_2 - x_i\\
    b_2 - x_j
    \end{Vmatrix}
    \right)}
    -
    \int\displaylimits_{\substack{x_i > x_k\\ x_j > x_l} }
    \alpha_{kjm}\alpha_{iln}
    e^{\frac{-1}{2h}\left(
    \begin{Vmatrix}
    a_1 - x_k\\
    b_2 - x_j
    \end{Vmatrix}
    +
    \begin{Vmatrix}
    a_2 - x_i\\
    b_1 - x_l
    \end{Vmatrix}
    \right)}
    \Bigg].
\end{align*}
}
We now collect terms and write the whole expression as
\begin{align*}
\int\displaylimits_{\mathbf{m,n} \in \R^{d-2}}
    e^{\frac{-1}{2h}\begin{Vmatrix}\mathbf{c} - \mathbf{m}\\\mathbf{c} - \mathbf{n}\end{Vmatrix}}
    \int\displaylimits_{\substack{x_i > x_k\\ x_j > x_l}}\Bigg[
    &\alpha_{ijm}\alpha_{kln}
    \left(
    e^{\frac{-1}{2h}\left(
    \begin{Vmatrix}
    a_1 - x_i\\
    b_1 - x_j
    \end{Vmatrix}
    +
    \begin{Vmatrix}
    a_2 - x_k\\
    b_2 - x_l
    \end{Vmatrix}
    \right)}
    -
    e^{\frac{-1}{2h}\left(
    \begin{Vmatrix}
    a_1 - x_i\\
    b_2 - x_j
    \end{Vmatrix}
    +
    \begin{Vmatrix}
    a_2 - x_k\\
    b_1 - x_l
    \end{Vmatrix}
    \right)}
    \right)
    \\
    &-
    \alpha_{ilm}\alpha_{kjn}
    \left(
    e^{\frac{-1}{2h}\left(
    \begin{Vmatrix}
    a_1 - x_i\\
    b_2 - x_l
    \end{Vmatrix}
    +
    \begin{Vmatrix}
    a_2 - x_k\\
    b_1 - x_j
    \end{Vmatrix}
    \right)}
    -
    e^{\frac{-1}{2h}\left(
    \begin{Vmatrix}
    a_1 - x_i\\
    b_1 - x_l
    \end{Vmatrix}
    +
    \begin{Vmatrix}
    a_2 - x_k\\
    b_2 - x_j
    \end{Vmatrix}
    \right)}
    \right)
    \\
    &+
    \alpha_{ijn}\alpha_{klm}
    \left(
    e^{\frac{-1}{2h}\left(
    \begin{Vmatrix}
    a_1 - x_k\\
    b_1 - x_l
    \end{Vmatrix}
    +
    \begin{Vmatrix}
    a_2 - x_i\\
    b_2 - x_j
    \end{Vmatrix}
    \right)}
    -
    e^{\frac{-1}{2h}\left(
    \begin{Vmatrix}
    a_1 - x_k\\
    b_2 - x_l
    \end{Vmatrix}
    +
    \begin{Vmatrix}
    a_2 - x_i\\
    b_1 - x_j
    \end{Vmatrix}
    \right)}
    \right)
    \\
    &-
    \alpha_{kjm}\alpha_{iln}
    \left(
    e^{\frac{-1}{2h}\left(
    \begin{Vmatrix}
    a_1 - x_k\\
    b_2 - x_j
    \end{Vmatrix}
    +
    \begin{Vmatrix}
    a_2 - x_i\\
    b_1 - x_l
    \end{Vmatrix}
    \right)}
    -
    e^{\frac{-1}{2h}\left(
    \begin{Vmatrix}
    a_1 - x_k\\
    b_1 - x_j
    \end{Vmatrix}
    +
    \begin{Vmatrix}
    a_2 - x_i\\
    b_2 - x_l
    \end{Vmatrix}
    \right)}
    \right)
    \Bigg].
\end{align*}
Now here the integral in $m,n$ is symmetric and we can switch the names of $m$ and $n$ in the last two terms to get
\begin{align*}
    \int\displaylimits_{\mathbf{m,n} \in \R^{d-2}}
    e^{\frac{-1}{2h}\begin{Vmatrix}\mathbf{c} - \mathbf{m}\\\mathbf{c} - \mathbf{n}\end{Vmatrix}}
    \int\displaylimits_{\substack{x_i > x_k\\ x_j > x_l}}\Bigg[
    \alpha_{ijm}\alpha_{kln}
    \Bigg(
    &e^{\frac{-1}{2h}\left(
    \begin{Vmatrix}
    a_1 - x_i\\
    b_1 - x_j
    \end{Vmatrix}
    +
    \begin{Vmatrix}
    a_2 - x_k\\
    b_2 - x_l
    \end{Vmatrix}
    \right)}
    -
    e^{\frac{-1}{2h}\left(
    \begin{Vmatrix}
    a_1 - x_i\\
    b_2 - x_j
    \end{Vmatrix}
    +
    \begin{Vmatrix}
    a_2 - x_k\\
    b_1 - x_l
    \end{Vmatrix}
    \right)}\\
    &+
    e^{\frac{-1}{2h}\left(
    \begin{Vmatrix}
    a_1 - x_k\\
    b_1 - x_l
    \end{Vmatrix}
    +
    \begin{Vmatrix}
    a_2 - x_i\\
    b_2 - x_j
    \end{Vmatrix}
    \right)}
    -
    e^{\frac{-1}{2h}\left(
    \begin{Vmatrix}
    a_1 - x_k\\
    b_2 - x_l
    \end{Vmatrix}
    +
    \begin{Vmatrix}
    a_2 - x_i\\
    b_1 - x_j
    \end{Vmatrix}
    \right)}
    \Bigg)
    \\
    -
    \alpha_{ilm}\alpha_{kjn}&
    \Bigg(
    e^{\frac{-1}{2h}\left(
    \begin{Vmatrix}
    a_1 - x_i\\
    b_2 - x_l
    \end{Vmatrix}
    +
    \begin{Vmatrix}
    a_2 - x_k\\
    b_1 - x_j
    \end{Vmatrix}
    \right)}
    -
    e^{\frac{-1}{2h}\left(
    \begin{Vmatrix}
    a_1 - x_i\\
    b_1 - x_l
    \end{Vmatrix}
    +
    \begin{Vmatrix}
    a_2 - x_k\\
    b_2 - x_j
    \end{Vmatrix}
    \right)}
    \\
    &+
    e^{\frac{-1}{2h}\left(
    \begin{Vmatrix}
    a_1 - x_k\\
    b_2 - x_j
    \end{Vmatrix}
    +
    \begin{Vmatrix}
    a_2 - x_i\\
    b_1 - x_l
    \end{Vmatrix}
    \right)}
    -
    e^{\frac{-1}{2h}\left(
    \begin{Vmatrix}
    a_1 - x_k\\
    b_1 - x_j
    \end{Vmatrix}
    +
    \begin{Vmatrix}
    a_2 - x_i\\
    b_2 - x_l
    \end{Vmatrix}
    \right)}
    \Bigg)
    \Bigg].
\end{align*}

We can switch order of integrals to get
\begin{align}\label{eqn:appendix_final_cond}
    \nonumber \int\displaylimits_{\substack{x_i > x_k\\ x_j > x_l}}
    \Bigg(
    &e^{\frac{-1}{2h}\left(
    \begin{Vmatrix}
    a_1 - x_i\\
    b_1 - x_j
    \end{Vmatrix}
    +
    \begin{Vmatrix}
    a_2 - x_k\\
    b_2 - x_l
    \end{Vmatrix}
    \right)}
    -
    e^{\frac{-1}{2h}\left(
    \begin{Vmatrix}
    a_1 - x_i\\
    b_2 - x_j
    \end{Vmatrix}
    +
    \begin{Vmatrix}
    a_2 - x_k\\
    b_1 - x_l
    \end{Vmatrix}
    \right)}\\
    &+
    e^{\frac{-1}{2h}\left(
    \begin{Vmatrix}
    a_1 - x_k\\
    b_1 - x_l
    \end{Vmatrix}
    +
    \begin{Vmatrix}
    a_2 - x_i\\
    b_2 - x_j
    \end{Vmatrix}
    \right)}
    -
    e^{\frac{-1}{2h}\left(
    \begin{Vmatrix}
    a_1 - x_k\\
    b_2 - x_l
    \end{Vmatrix}
    +
    \begin{Vmatrix}
    a_2 - x_i\\
    b_1 - x_j
    \end{Vmatrix}
    \right)}
    \Bigg)
    \int\displaylimits_{\mathbf{m,n} \in \R^{d-2}}
    (\alpha_{ijm}\alpha_{kln}-\alpha_{ilm}\alpha_{kjn}).
\end{align}
Now the inner integral in \eqref{eqn:appendix_final_cond},
\begin{align}
\int\displaylimits_{\mathbf{m,n} \in \R^{d-2}}
(\alpha_{ijm}\alpha_{kln}-\alpha_{ilm}\alpha_{kjn}),
\end{align}
is non-negative due to Constraint A. To see this let $x_{11} = x_i$, $x_{21} = x_{j}$, $x_{10} = x_k$, and $x_{20} = x_l$. And let $(x_{31},...,x_{d1}) = \mathbf{m} \vee \mathbf{n}$; similarly let $(x_{30},...,x_{d0}) = \mathbf{m} \wedge \mathbf{n}$. So we have $x_{10} < x_{11}, x_{20} < x_{21}, \ldots, x_{d0} < x_{d1}$. Now we can write (A.5) as \footnote{This is done by splitting the integral into the $2^{d-2}$ orderings of the coordinates of $\mathbf{m}$ and $\mathbf{n}$ and renaming so that our sum/integral is over only one orthant.}

\begin{align*}
\int\displaylimits_{ \substack{x_{30} < x_{31}\\ \ldots\\ x_{d0} < x_{d1}} }
\sum_{a\in\{0,1\}^{d-2}} \left(\alpha(x_{(11a)})\alpha(x_{(00\overline a)}) - \alpha(x_{(10a)}) \alpha(x_{(01\overline a)})\right),
\end{align*}
which can easily be seen as true due to Constraint A. Note that if we considered the above analysis for any other pair of coordinates (above we only considered the first two coordinates), we will use that $\alpha_{ijk} = \alpha(m_1,\dots,x_i,\dots,x_j,\dots,m_d)$, where $x_i$ and $x_j$ could be any two of the $d$ coordinates. Thus in \textit{Constraint A} we require that 
$$\sum_{a\in\{0,1\}^{d-2}} \left(\alpha(x_{\pi(11a)})\alpha(x_{\pi(00\overline a)}) - \alpha(x_{\pi(10a)}) \alpha(x_{\pi(01\overline a)})\right) \geq 0$$
for any \textit{permutation} $\pi$ of $\{1,\dots,d\}$.

Now we have shown the inner summation in \eqref{eqn:appendix_final_cond} is non-negative. We can further say that its factor in \eqref{eqn:appendix_final_cond} is also non-negative due to Lemma~\ref{lem:exppos}. Thus the whole summation is positive, proving Theorem~\ref{thm:conv}.
 
\end{proof}

\begin{lemma}\label{lem:exppos}
For any $a_1,b_1,a_2,b_2 \in \R$ such that $a_1 > a_2, b_1 > b_2$, and any $x_i,x_j,x_k,x_l \in \R$ such that $x_i > x_k$, $x_j > x_l$, we have
\begin{align*}
    &e^{\frac{-1}{2h}\left(
    \begin{Vmatrix}
    a_1 - x_i\\
    b_1 - x_j
    \end{Vmatrix}
    +
    \begin{Vmatrix}
    a_2 - x_k\\
    b_2 - x_l
    \end{Vmatrix}
    \right)}
    -
    e^{\frac{-1}{2h}\left(
    \begin{Vmatrix}
    a_1 - x_i\\
    b_2 - x_j
    \end{Vmatrix}
    +
    \begin{Vmatrix}
    a_2 - x_k\\
    b_1 - x_l
    \end{Vmatrix}
    \right)}\\
    + \ \ \
    &e^{\frac{-1}{2h}\left(
    \begin{Vmatrix}
    a_1 - x_k\\
    b_1 - x_l
    \end{Vmatrix}
    +
    \begin{Vmatrix}
    a_2 - x_i\\
    b_2 - x_j
    \end{Vmatrix}
    \right)}
    -
    e^{\frac{-1}{2h}\left(
    \begin{Vmatrix}
    a_1 - x_k\\
    b_2 - x_l
    \end{Vmatrix}
    +
    \begin{Vmatrix}
    a_2 - x_i\\
    b_1 - x_j
    \end{Vmatrix}
    \right)}
    \ \ \ \geq 0
\end{align*}
where $\begin{Vmatrix}\cdot \end{Vmatrix}$ is used denote the $L_2$ norm.
\end{lemma}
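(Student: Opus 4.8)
The plan is to expand the squared norms in the Gaussian exponents, observe that the eight squared-distance terms occurring across the four summands regroup into only two distinct ``$\{x_i,x_k\}$–blocks'' in the first coordinate and two distinct ``$\{x_j,x_l\}$–blocks'' in the second, and thereby factor the whole expression as a product of two differences, each visibly nonnegative under the hypotheses $a_1>a_2$, $b_1>b_2$, $x_i>x_k$, $x_j>x_l$.

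Concretely, put $\lambda=\tfrac{1}{2h}>0$, expand each $\begin{Vmatrix}\cdot\end{Vmatrix}^2$ as a sum of coordinate squares, and set
\[
P_1:=(a_1-x_i)^2+(a_2-x_k)^2,\qquad P_2:=(a_1-x_k)^2+(a_2-x_i)^2,
\]
\[
R_1:=(b_1-x_j)^2+(b_2-x_l)^2,\qquad R_2:=(b_2-x_j)^2+(b_1-x_l)^2 .
\]
Reading off the exponents of the four terms, they are $-\lambda(P_1+R_1)$, $-\lambda(P_1+R_2)$, $-\lambda(P_2+R_2)$, $-\lambda(P_2+R_1)$ respectively: the first-coordinate contribution depends only on how $\{a_1,a_2\}$ is paired with $\{x_i,x_k\}$, and the second-coordinate contribution only on how $\{b_1,b_2\}$ is paired with $\{x_j,x_l\}$, because the standard Gaussian density is a product over coordinates. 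Hence the left-hand side of the lemma collapses to
\[
\bigl(e^{-\lambda P_1}-e^{-\lambda P_2}\bigr)\,\bigl(e^{-\lambda R_1}-e^{-\lambda R_2}\bigr).
\]
A one-line expansion gives $P_1-P_2=-2(a_1-a_2)(x_i-x_k)<0$ and $R_1-R_2=-2(b_1-b_2)(x_j-x_l)<0$, so, $t\mapsto e^{-\lambda t}$ being strictly decreasing, both factors are strictly positive and the product is $\geq 0$. Equivalently, $P_1<P_2$ and $R_1<R_2$ are nothing but the $\mathrm{TP}_2$ property of the one-dimensional Gaussian kernel $(a,x)\mapsto e^{-\lambda(a-x)^2}$, which one could quote directly instead of recomputing.

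The only step requiring care is the bookkeeping that verifies the product structure $\{P_1,P_2\}+\{R_1,R_2\}$ of the four exponents; once that is in place everything else is immediate, and I do not expect a genuine obstacle. It is worth stressing that this decoupling is exactly the place where a \emph{standard} (diagonal-covariance) Gaussian is used: for a Gaussian with correlated coordinates the $a$- and $b$-contributions would not separate, the product form would fail, and so would the argument — consistent with Counterexample~\ref{counter:2d_mtp2}.
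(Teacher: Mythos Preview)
Your proposal is correct and follows essentially the same route as the paper's proof: both arguments expand the squared norms and factor the four-term alternating sum into a product of two differences, then verify each factor is nonnegative via the identities $(a_1-a_2)(x_i-x_k)\ge 0$ and $(b_1-b_2)(x_j-x_l)\ge 0$. Your bookkeeping with $P_1,P_2,R_1,R_2$ is slightly cleaner than the paper's version (which first extracts the common factor $e^{-\lambda(a_1^2+a_2^2+b_1^2+b_2^2+x_i^2+x_j^2+x_k^2+x_l^2)}$ and works with the cross terms $e^{\frac{1}{h}(a_1x_i+\cdots)}$), but the underlying factorization and sign argument are identical.
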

\begin{proof}
We can write the expression as 
\begin{align*}
    e^{\frac{-1}{2h}(a_1^2+a_2^2+b_1^2+b_2^2+x_i^2+x_j^2+x_k^2+x_l^2)}
    \big( &e^{\frac{1}{h}(a_1x_i + b_1x_j + a_2x_k + b_2x_l)}\\
        - &e^{\frac{1}{h}(a_1x_i + b_2x_j + a_2x_k + b_1x_l)}\\
        + &e^{\frac{1}{h}(a_1x_k + b_1x_l + a_2x_i + b_2x_j)}\\
        - &e^{\frac{1}{h}(a_1x_k + b_1x_j + a_2x_i + b_2x_l)}
    \big)
\end{align*}
The common factor $e^{\frac{-1}{2h}(a_1^2+a_2^2+b_1^2+b_2^2+x_i^2+x_j^2+x_k^2+x_l^2)} > 0$ so we only need to show that the sum of the inner terms are non-negative to show that the whole expression is negative. So looking at the inner expression, we have
\begin{align*}
    &\big( \ e^{\frac{1}{h}(a_1x_i + b_1x_j + a_2x_k + b_2x_l)}
        - e^{\frac{1}{h}(a_1x_i + b_2x_j + a_2x_k + b_1x_l)}
        + \ e^{\frac{1}{h}(a_1x_k + b_1x_j + a_2x_i + b_2x_j)}
        - e^{\frac{1}{h}(a_1x_k + b_1x_j + a_2x_i + b_2x_l)} \ \big)\\
    &= e^{\frac{1}{h}(a_1x_i + a_2x_k)}(
    e^{\frac{1}{h}(b_1x_j + b_2x_l)} -
    e^{\frac{1}{h}(b_2x_j + b_1x_l)})
    + e^{\frac{1}{h}(a_1x_k + a_2x_i)}(
    e^{\frac{1}{h}(b_1x_l + b_2x_j)}
    - e^{\frac{1}{h}(b_1x_j + b_2x_l)})\\
    &= (e^{\frac{1}{h}(b_1x_j + b_2x_l)} - e^{\frac{1}{h}(b_2x_j + b_1x_l)})
       (e^{\frac{1}{h}(a_1x_i + a_2x_k)} - e^{\frac{1}{h}(a_1x_k + a_2x_i)})\\
    &\geq 0.
\end{align*}
The inequality at the end follows from the fact that both factors in the expression are non-negative. This is because in both factors, the exponent of the positive term is larger than the exponent of the negative term. Precisely
$$
a_1x_i + a_2x_k - a_1x_k - a_2x_i = (a_1-a_2)(x_i-x_k) \geq 0,
$$
and
$$
b_1x_j + b_2x_lk - b_2x_j - b_1x_l = (b_1-b_2)(x_j-x_l) \geq 0,
$$
where the inequalities here are due to our predicate that $a_1 > a_2, b_1 > b_2$ and $x_i > x_k$, $x_j > x_l$.

\end{proof}

\section{Discussion of Counterexample~\ref{counter:2d_mtp2}}\label{appendix:B}
The Totally Positive Kernel Density estimator (TPKDE) can be interpreted as the convolution of the uniform distribution over a min-max closed set with a scaled \emph{standard} multivariate Gaussian kernel, that is, with covariance matrix $h\Sigma$, where $h$ is scalar, and $\Sigma = I$ is the identity matrix. We show this in (\ref{eqn:TPKDEconv}). We also show in Theorem \ref{thm:main} (using Theorem \ref{thm:conv} and Proposition \ref{prop}) that this convolution yields an MTP$_2$ density. A natural question to ask is whether we can relax the condition on the Gaussian kernel to also include Gaussian kernels with general covariance matrices, that is, Gaussian kernels with covariance matrices that are not the product of a scalar and the identity matrix. Counterexample \ref{counter:2d_mtp2} shows that, in fact, such a relaxation is not possible. Since we use an M-matrix in Counterexample \ref{counter:2d_mtp2}, it also shows that we cannot even relax the condition to {\em MTP$_2$ Gaussians}, that is, to Gaussians with M-matrix covariance matrices, which is a tighter and less general class of Gaussians.

In order to derive this counterexample, we can look at \ref{appendix:A} for the proof of Theorem \ref{thm:conv}, for the two-dimensional Gaussian cases (since Counterexample \ref{counter:2d_mtp2} is for the two-dimensional case). The proof would proceed exactly the same as in \ref{appendix:A} with the $L_2$ norm being replaced by $\begin{Vmatrix}\cdot \end{Vmatrix}_{\Sigma}$ where for any vector $\mathbf{x}$, we have $\begin{Vmatrix} \mathbf{x} \end{Vmatrix}_{\Sigma} = \mathbf{x}^T\Sigma^{-1}\mathbf{x}$. For the purpose of deriving a counterexample, we ignore equality cases when decomposing the integral into regions (i.e. we ignore Cases 5, 6, and 7 - they do not vanish as in \ref{appendix:A}). We end up with a version of \eqref{eqn:appendix_final_cond} which says
\begin{align}\label{eqn:appendix_cond_counter}
    \nonumber \int\displaylimits_{\substack{x_i > x_k\\ x_j > x_l}}
    \Bigg(
    &e^{\frac{-1}{2}\left(
    \begin{Vmatrix}
    a_1 - x_i\\
    b_1 - x_j
    \end{Vmatrix}_\Sigma
    +
    \begin{Vmatrix}
    a_2 - x_k\\
    b_2 - x_l
    \end{Vmatrix}_\Sigma
    \right)}
    -
    e^{\frac{-1}{2}\left(
    \begin{Vmatrix}
    a_1 - x_i\\
    b_2 - x_j
    \end{Vmatrix}_\Sigma
    +
    \begin{Vmatrix}
    a_2 - x_k\\
    b_1 - x_l
    \end{Vmatrix}_\Sigma
    \right)}\\
    &+
    e^{\frac{-1}{2}\left(
    \begin{Vmatrix}
    a_1 - x_k\\
    b_1 - x_l
    \end{Vmatrix}_\Sigma
    +
    \begin{Vmatrix}
    a_2 - x_i\\
    b_2 - x_j
    \end{Vmatrix}_\Sigma
    \right)}
    -
    e^{\frac{-1}{2}\left(
    \begin{Vmatrix}
    a_1 - x_k\\
    b_2 - x_l
    \end{Vmatrix}_\Sigma
    +
    \begin{Vmatrix}
    a_2 - x_i\\
    b_1 - x_j
    \end{Vmatrix}_\Sigma
    \right)}
    \Bigg)
    (\alpha_{ij}\alpha_{kl}-\alpha_{il}\alpha_{kj}).
\end{align}
Now in \eqref{eqn:appendix_cond_counter}, the factor
$(\alpha_{ij}\alpha_{kl}-\alpha_{il}\alpha_{kj})$ is non-negative due to Constraint A (which is the same as the MTP$_2$ condition in 2-dimensions as shown in Remark~\ref{remark:ConsA}). For the whole expression to be non-negative, it would be sufficient for
\begin{align}
\nonumber
&e^{\frac{-1}{2}\left(
    \begin{Vmatrix}
    a_1 - x_i\\
    b_1 - x_j
    \end{Vmatrix}_\Sigma
    +
    \begin{Vmatrix}
    a_2 - x_k\\
    b_2 - x_l
    \end{Vmatrix}_\Sigma
    \right)}
    -
    e^{\frac{-1}{2}\left(
    \begin{Vmatrix}
    a_1 - x_i\\
    b_2 - x_j
    \end{Vmatrix}_\Sigma
    +
    \begin{Vmatrix}
    a_2 - x_k\\
    b_1 - x_l
    \end{Vmatrix}_\Sigma
    \right)}\\
    &+
    e^{\frac{-1}{2}\left(
    \begin{Vmatrix}
    a_1 - x_k\\
    b_1 - x_l
    \end{Vmatrix}_\Sigma
    +
    \begin{Vmatrix}
    a_2 - x_i\\
    b_2 - x_j
    \end{Vmatrix}_\Sigma
    \right)}
    -
    e^{\frac{-1}{2}\left(
    \begin{Vmatrix}
    a_1 - x_k\\
    b_2 - x_l
    \end{Vmatrix}_\Sigma
    +
    \begin{Vmatrix}
    a_2 - x_i\\
    b_1 - x_j
    \end{Vmatrix}_\Sigma
    \right)}
\end{align}
to be non-negative for the whole region of the integral. In \ref{appendix:A} this ends up being the case due to \ref{lem:exppos}. However, here this is not necessarily true. For covariance matrix $\Sigma$ such that
$$
\Sigma^{-1} = \begin{bmatrix}
a&c\\
c&d\\
\end{bmatrix}
$$
with $a > 0, d > 0, c < 0 \in \mathbb{R}$ chosen such that $\Sigma^{-1}$ is a positive-definite M-matrix, we can compare the coefficients of the exponents and get the sufficient condition that 
$$
(a(x_i-x_k)+c(x_j-x_l))(d(x_j-x_l)+c(x_i-x_k)) \geq 0.
$$
This is not satisfied when $(x_i-x_k) > (x_j-x_l)$ and $a > |c| > d$. We can now try out different values of $\Sigma$ and the sample set that violate the sufficient condition. Indeed the set MM$(X) = \{(2,0),(0,1),(2,1),(0,0)\}$ has $(x_i-x_k) > (x_j-x_l)$ for the points $(2,1)$ and $(0,0)$ (i.e. $x_i = 2, x_j = 1, x_k=0,x_j=0$. And
$$
 \Sigma^{-1}= \begin{pmatrix} 5 & -2 \\
                             -2 & 1  
                             \end{pmatrix}
$$
has $5 = a > 2 =|c| > d = 1$. This is exactly the inverse covariance matrix and sample set used in Counterexample \ref{counter:2d_mtp2}.

\end{document}